\newtheorem{thm}{Theorem}
\newtheorem{cor}[thm]{Corollary}
\newtheorem{defi}[thm]{Definition}
\newtheorem{rem}[thm]{Remark}
\newtheorem{nota}[thm]{Notation}
\newtheorem{princ}[thm]{Principle}
\newtheorem{ack}[thm]{Acknowledgement}
\newtheorem*{tempo*}{Template}
\newcommand\be{\begin{equation}}
\newcommand\ee{\end{equation}}
\newbox\gnBoxA
\newdimen\gnCornerHgt
\newdimen\gnArgHgt
\def\Godelnum #1{%
	\setbox\gnBoxA=\hbox{$#1$}%
	\gnArgHgt=\ht\gnBoxA%
	\ifnum \gnArgHgt<\gnCornerHgt
		\gnArgHgt=0pt%
	\else
		\advance \gnArgHgt by -\gnCornerHgt%
	\fi
	\raise\gnArgHgt\hbox{$\ulcorner$} \box\gnBoxA %
		\raise\gnArgHgt\hbox{$\urcorner$}}
\def\bdefi{\begin{defi}\rm}
\def\edefi{\end{defi}}
\def\bnota{\begin{nota}\rm}
\def\enota{\end{nota}}
\def\brem{\begin{rem}\rm}
\def\erem{\end{rem}}
\def\PFTPA{\textup{\textsf{PF-TP}}_{\forall}}
\def\FAN{\textup{\textsf{FAN}}}
\def\IST{\textup{\textsf{IST}}}
\def\ATR{\textup{\textsf{ATR}}}
\def\BCT{\textup{\textsf{BCT}}}
\def\CCT{\textup{\textsf{CCT}}}
\def\cont{\textup{\textsf{cont}}}
\def\RCAo{\textup{\textsf{RCA}}_{0}^{\omega}}
\def\WKL{\textup{\textsf{WKL}}}
\def\UWKL{\textup{\textsf{UWKL}}}
\def\T{\mathcal{T}}
\def\bye{\end{document}}
\def\N{{\mathbb  N}}
\def\Q{{\mathbb  Q}}
\def\R{{\mathbb  R}}
\def\HBU{\textup{\textsf{HBU}}}
\def\FAN{\textup{\textsf{FAN}}}
\def\MUC{\textup{\textsf{MUC}}}
\def\TJ{\textup{\textsf{TJ}}}
\def\WCN{{\textup{\textsf{WC-N}}}}
\def\SE{{\textup{\textsf{SE}}}}
\def\MP{{\textup{\textsf{MP}}}}
\def\WMP{{\textup{WMP}}}
\def\WMP{{\textup{\textsf{WMP}}}}
\def\R{{\mathbb{R}}}
\def\({\textup{(}}
\def\){\textup{)}}
\def\st{\textup{st}}
\def\asa{\leftrightarrow}
\def\di{\rightarrow}
\def\M{\textsf{\textup{DG}}}
\def\ACA{\textup{\textsf{ACA}}}
\def\paai{\Pi_{1}^{0}\textup{-\textsf{TRANS}}}
\def\QFAC{\textup{\textsf{QF-AC}}}
\def\PFTPE{\textsf{\textup{PF-TP}}_{\exists}}
\numberwithin{equation}{section}
\numberwithin{thm}{section}
\begin{document}
\title{A note on non-classical nonstandard arithmetic}
\author{Sam Sanders}
\address{School of Mathematics, University of Leeds, UK \& Depart of Mathematics, TU Darmstadt, Germamy}
\email{sasander@me.com}
\keywords{Nonstandard Analysis, higher-order arithmetic, intuitionism}
\subjclass[2010]{03F35 and 26E35}
\begin{abstract}
Recently, a number of formal systems for Nonstandard Analysis restricted to the language of finite types, i.e.\ \emph{nonstandard arithmetic}, have been proposed.  
We single out one particular system by Dinis-Gaspar, which is categorised by the authors as being part of \emph{intuitionistic nonstandard arithmetic}.  
Their system is indeed \emph{in}consistent with the \emph{Transfer} axiom of Nonstandard Analysis, and the latter axiom is classical in nature as it implies (higher-order) comprehension.  
Inspired by this observation, the main aim of this paper is to provide answers to the following questions:
\begin{enumerate}
\item[(Q1)] In the spirit of \emph{Reverse Mathematics}, what is the \emph{minimal} fragment of \emph{Transfer} that is inconsistent with the Dinis-Gaspar system?
\item[(Q2)] What other axioms are inconsistent with the Dinis-Gaspar system?
\end{enumerate}
Our answer to the first question suggests that the aforementioned inconsistency actually derives from the axiom of extensionality relative to the standard world, and that other (much stronger) consequences of \emph{Transfer} are actually harmless. 
Perhaps surprisingly, our answer to the second question shows that the Dinis-Gaspar system is inconsistent with a number of (non-classical) continuity theorems which one would -in our opinion- categorise as \emph{intuitionistic} in the sense of Brouwer.  
Finally, we show that the Dinis-Gaspar system involves a \emph{standard part map}, suggesting this system also pushes the boundary of what still counts as `Nonstandard Analysis' or `internal set theory'.  
\end{abstract}


\maketitle
\thispagestyle{empty}
\section{Introduction}
\subsection{Aim and motivation}\label{introke}
In the last decade, a number of versions of Heyting and Peano arithmetic in all finite types have been introduced (\cites{brie, fega,benno2, dinidini}) which are based on (fragments of) Nelson's \emph{internal set theory} (\cite{wownelly}).  
Such systems allow for the extraction of the (copious) computational content of Nonstandard Analysis, as discussed at length in \cite{SB}.  In this paper, we study the system $\M$ by Dinis-Gaspar to be found in  \cite{dinidini} and Section \ref{pewlim}; $\M$ has been described as follows:
\begin{quote}    
We present a bounded modified realisability and a bounded functional interpretation of \emph{intuitionistic} nonstandard arithmetic with nonstandard principles. (\cite{dinidini}*{Abstract}, emphasis added)
\end{quote}
Similar claims may be found in the body of \cite{dinidini}: $\M$ is part of intuitionistic mathematics, as claimed by the authors.  
This claim is not without merit: $\M$ is indeed inconsistent with the \emph{Transfer} axiom of Nonstandard Analysis, and this axiom is essentially the nonstandard version of 
comprehension.  By way of an example, \emph{Transfer} restricted to $\Pi_{1}^{0}$-formulas translates to the `Turing jump functional' $\exists^{2}$, as defined in Section \ref{TJF}.  
\emph{However}, `non-classical' does not necessarily imply `intuitionistic', and we shall study the following two questions in this paper.  
\begin{enumerate}
\item[(Q1)] In the spirit of \emph{Reverse Mathematics}, what is the \emph{minimal} fragment of \emph{Transfer} that is inconsistent with $\M$?
\item[(Q2)] What other (intuitionistic) axioms are inconsistent with $\M$?
\end{enumerate}
The main results of this paper constitute answers to the questions (Q1) and (Q2), and we now discuss them in some more detail. 

\smallskip

Regarding (Q1), we refer to \cites{simpson2, stillebron, simpson1} for an introduction and overview of Reverse Mathematics (RM for short).  We shall consider the \emph{parameter-free Transfer} principle studied in \cite{bennosam}, and related axioms.    
We will identify the axiom of extensionality (relative to the standard world) as the real culprit: this axiom follows from \emph{Transfer} and is inconsistent with $\M$, while other axioms implied by \emph{Transfer}, even involving the Turing jump functional, \emph{are} consistent with $\M$. 

\smallskip

Regarding (Q2), we show that $\M$ is inconsistent with a number of (non-classical) axioms 
which one would categorise as \emph{intuitionistic}, i.e.\ part of Brouwer's intuitionistic mathematics\footnote{Since `intuitionism' is the first keyword of \cite{dinidini}, we take `intuitionistic' to mean `part of Brouwer's intuitionistic mathematics'.  We discuss this choice in more detail in Remark \ref{jezus}.\label{haha}}.  The most blatant example is the statement that, relative to the standard world, all functionals
on the Cantor space are (uniformly) continuous.     
  
\smallskip  
  
In the course of investigating (Q1) and (Q2), one eventually stumbles upon the fact that the Dinis-Gaspar system allows one to define a (highly elementary)
\emph{standard part map}, as discussed in Section \ref{nsansa}.  Since such a map is not available in Nelson's internal set theory, 
and \emph{external} in Robinson's approach, the Dinis-Gaspar system thus pushes the boundary of what still counts as `Nonstandard Analysis'.

\smallskip  
  
As to the structure of this paper, we briefly discuss the importance of continuity in intuitionism in Remark \ref{intu}.  
The formal system $\M$ from \cite{dinidini} and associated prerequisites are sketched in Section \ref{pewlim}.  
Our main results may be found in Section~\ref{main}, 
which provide fairly definitive answers to questions (Q1) and (Q2).  We formulate the conclusion to this paper in Section \ref{konkelfoes}.  
 
\smallskip  

Next, we point out the (intimate) relationship between Brouwer 's intuitionistic mathematics and continuity, lest the reader believe the above is merely pedantry. 
\begin{rem}[Intuitionism and continuity]\rm\label{intu}
L.E.J.\ Brouwer is the founder of \emph{intuitionism}, a philosophy of mathematics which later developed into the first full-fledged school of \emph{constructive mathematics}.    
The latter is an umbrella term for approaches to mathematics in which `there exists $x$' is systematically interpreted as `we can compute/construct $x$' (and similarly for the other logical symbols).

\smallskip

Under this new interpretation of the logical symbols, certain laws do not make any sense, and are therefore rejected; the most (in)famous one being the \emph{law of excluded middle} $P\vee \neg P$.    
The resulting logic is \emph{intuitionistic logic}, and we refer to \cites{beeson1, troeleke1} for an introduction to the various approaches to constructive mathematics.   
  
\smallskip

Brouwer proved in 1927 (see \cite{vajuju}*{p.\ 444} for an English translation) that every total (in the intuitionistic sense) function on the unit interval is (uniformly) continuous, 
a result which seems to contradict classical mathematics.  The core axioms for intuitionistic mathematics indeed include a `continuity' axiom 
(called $\WCN$ in \cite{troeleke1} and $\textsf{BP}_{0}$ in \cite{beeson1}) which contradicts classical mathematics, 
and can be used to prove the aforementioned (uniform) continuity theorem by Brouwer.   

\smallskip

The previous is well-known, but is mentioned since we want to stress the following: a very low bar a logical system 
has to clear to deserve the moniker `intuitionistic', is to be consistent with the aforementioned continuity theorem and axiom.  
As it turns out, this does not seem to be the case for the Dinis-Gaspar system.  
\end{rem}
Finally, despite (or perhaps better: `because') the above criticism, we do believe that $\M$ has some has some interesting features.  
For instance, $\M$ is a sort of non-classical analogue of the Fernand-Oliva bounded functional interpretation (BFI hereafter; see \cite{BFI}). 
Now, the BFI \emph{refutes} extensionality, so results such as Theorem~\ref{skimitar} and its corollaries are, in this light, rather natural. 
Regarding BFI and related matters, \cite{BFI2}*{Section 3} is also highly informative.

\section{Preliminaries}\label{pewlim}
We introduce the Dinis-Gaspar system $\M$, and some preliminaries.  
\subsection{Internal set theory and its fragments} \label{otherm}
In this section, we discuss Nelson's \emph{internal set theory}, first introduced in \cite{wownelly}, and the Dinis-Gaspar system $\M$ from \cite{dinidini}.  
The system $\M$ is an \emph{extension} of a fragment of Nelson's system with (non-classical) axioms pertaining to \emph{majorizability}.  

\smallskip

In Nelson's \emph{syntactic} approach to Nonstandard Analysis (\cite{wownelly}), as opposed to Robinson's semantic one (\cite{robinson1}), a new predicate `st($x$)', read as `$x$ is standard' is added to the language of \textsf{ZFC}, the usual foundation of mathematics.  
The notations $(\forall^{\st}x)$ and $(\exists^{\st}y)$ are short for $(\forall x)(\st(x)\di \dots)$ and $(\exists y)(\st(y)\wedge \dots)$.  A formula is called \emph{internal} if it does not involve `st', and \emph{external} otherwise.   
The three external axioms schemes \emph{Idealisation}, \emph{Standard Part}, and \emph{Transfer} govern the new predicate `st';  they are respectively defined\footnote{The superscript `fin' in \textsf{(I)} means that $x$ is finite, i.e.\ its number of elements are bounded by a natural number.      } as:  
\begin{enumerate}
\item[\textsf{(I)}] $(\forall^{\st~\textup{fin}}x)(\exists y)(\forall z\in x)\varphi(z,y)\di (\exists y)(\forall^{\st}x)\varphi(x,y)$, for internal $\varphi$.  
\item[\textsf{(S)}] $(\forall x)(\exists^{\st}y)(\forall^{\st}z)\big([z\in x\wedge \varphi(z)]\asa z\in y\big)$, for any $\varphi$.
\item[\textsf{(T)}] $(\forall^{\st}x)\varphi(x, t)\di (\forall x)\varphi(x, t)$, where $\varphi$ is internal,  $t$ captures \emph{all} parameters of $\varphi$, and $t$ is standard.  
\end{enumerate}
The system \textsf{IST} is (the internal system) \textsf{ZFC} extended with the aforementioned three external axioms;  
the former is a conservative extension of \textsf{ZFC} for the internal language, as proved in \cite{wownelly}.    

\smallskip

In \cites{fega, dinidini, brie,benno2}, the authors study G\"odel's system $\textsf{T}$ extended with versions of the external axioms of \textsf{IST}.  
In particular, they consider nonstandard extensions of the (internal) systems \textsf{E-HA}$^{\omega}$ and $\textsf{E-PA}^{\omega}$, respectively \emph{Heyting and Peano arithmetic in all finite types and the axiom of extensionality}.       
We refer to \cite{brie}*{\S2.1} for the exact details of these (mainstream in mathematical logic) systems.  

\smallskip

The results in \cite{fega, dinidini} are inspired by those in \cite{brie}.  In particular, the notion of \emph{finiteness} central to the latter is replaced by the notion of \emph{strong majorizability}.  
The latter notion and the associated system $\M$ is introduced in the next paragraph, assuming familiarity with the higher-type framework of G\"odel's $\textsf{T}$.  

\smallskip

The system $\M$, a conservative extension of $\textsf{E-HA}^{\omega}$, is based on the Howard-Bezem notion of \emph{strong majorizability}.  We first introduce the latter and related notions.  
For a more extensive background on strong majorizability, see \cite{kohlenbach3}*{\S3.5}.    
\begin{defi}[Majorizability]\rm The strong majorizability predicate `$\leq^{*}$' is inductively defined as follows:
\begin{itemize}
\item $x \leq_{0}^{*} y$ is $x \leq_{0} y$;
\item $x \leq_{\rho\di \sigma}^{*} y$ is   $(\forall u)(\forall v\leq_{\rho}^{*}u)\big(xu\leq_{\sigma}^{*} yv \wedge yu\leq_{\sigma}^{*} yv  \big)  $.   
\end{itemize}
An object $x^{\rho}$ is called \emph{monotone} if $x\leq_{\rho}^{*}x$.  The quantifiers $(\tilde{\forall} x^{\rho})$ and $(\tilde{\exists} y^{\rho} )$ range over the monotone objects of type $\rho$, i.e.\ they are abbreviations for the formulas $(\forall x)(x\leq^{*}x \di \dots)$ and 
$(\exists y)(y\leq^{*}y\wedge\dots)$.
\edefi
The system $\M$ is defined as follows in \cite{dinidini}*{\S2}.  The language of $\textsf{E-HA}_{\st}^{\omega}$ is the language of $\textsf{E-HA}^{\omega}$ extended with a new `standardness' predicate $\st^{\sigma}$ for every finite type $\sigma$.  
The typing of the standardness predicate is usually omitted.    
\begin{defi}[Standard quantifiers]\label{notawin}\rm
We write $(\forall^{\st}x^{\tau})\Phi(x^{\tau})$ and $(\exists^{\st}x^{\sigma})\Psi(x^{\sigma})$ as short for 
$(\forall x^{\tau})\big[\st(x^{\tau})\di \Phi(x^{\tau})\big]$ and $(\exists^{\st}x^{\sigma})\big[\st(x^{\sigma})\wedge \Psi(x^{\sigma})\big]$.     
A formula $A$ is `internal' if it does not involve $\st$, and external otherwise.  The formula $A^{\st}$ is defined from $A$ by appending `st' to all quantifiers (except bounded number quantifiers).    
\end{defi}
Regarding the previous definition, we often say that `the formula $A^{\st}$ is the formula $A$ relative to the standard world'.
\bdefi[Basic axioms]\label{debs}
The system $ \textsf{E-HA}^{\omega}_{\st} $ is defined as $ \textsf{E-HA}^{\omega} + \T^{*}_{\st} + \textsf{IA}^{\st}$, where $\T^{*}_{\st}$
consists of the following axiom schemas.
\begin{enumerate}
\item[(a)] $x =_{\sigma} y \di (\st^{\sigma}(x) \di \st^{\sigma}(y));$
\item[(b)] $\st^{\sigma}(y) \di (x\leq_{\sigma}^{*} y \di \st^{\sigma}(x))$;
\item[(c)] $\st^{\sigma}(t)$, for each closed term $t$ of type $\sigma$; 
\item[(d)] $\st^{\sigma\di \tau} (z) \di (\st^{\sigma} (x) \di \st^{\tau}(zx))$.
\end{enumerate}
Items (a)-(d) are called the \emph{standardness axioms}, and (b) is singled out regularly below.  
The \emph{external induction axiom} \textsf{IA}$^{\st}$ is the following schema for any $\Phi$: 
\be\tag{\textsf{IA}$^{\st}$}
\Phi(0)\wedge(\forall^{\st}n^{0})(\Phi(n) \di\Phi(n+1))\di(\forall^{\st}n^{0})\Phi(n).     
\ee
\edefi
\noindent
The system $\M$ is then defined as $\textsf{E-HA}^{\omega}_{\st}$ plus the following non-basic axioms.  
\bdefi[Non-basic axioms]\label{nba}~
\begin{itemize}
\item Monotone Choice $ \textsf{mAC}^{\omega}$:     For any $\Phi$, we have   
\[
(\tilde{\forall}^{\st} x)(\tilde{\exists}^{\st}y)\Phi(x, y)\di (\tilde{\exists}^{\st}f)(\tilde{\forall}^{\st} x)(\exists y\leq^{*}f(x))\Phi(x,y). 
\]
\item Realization $\textsf{R}^{\omega}$:  For any $\Phi$, we have   
\[
(\forall x)(\exists^{\st}y)\Phi(x, y)\di (\tilde{\exists}^{\st}z)(\forall x)(\exists y\leq^{*}z)\Phi(x,y). 
\]
\item Idealisation $\textsf{I}^{\omega}$: For any internal $\phi$, we have:
\[
(\tilde{\forall}^{\st}z)(\exists x)(\forall y\leq^{*}z)\phi(x, y)\di (\exists x)({\forall}^{\st} y)\phi(x, y)
\]
\item Independence of premises $\textsf{IP}^{\omega}_{\tilde{\forall}^{\st}}$: For any internal $\phi$ and any $\Psi$:
\[
[(\tilde{\forall}^{\st}x)\phi(x)\di (\tilde{\exists}^{\st}y)\Psi(y)]\di (\tilde{\exists}^{\st}z)[(\tilde{\forall}^{\st}x)\phi(x)\di (\tilde{\exists}y\leq^{*}z)\Psi(y)]
\]
\item Nonstandard Markov's principle $\textsf{\textup{M}}^{\omega}$: For any internal $\phi, \psi$, we have 
\[
[(\tilde{\forall}^{\st}x)\phi(x)\di \psi] \di   (\tilde{\exists}^{\st}y)[(\forall x\leq^{*}y)\phi(x)\di \psi]
\]
\item Majorizability axiom \textsf{MAJ}$^{\omega}$:   $(\forall^{\st}x)(\exists^{\st}y)(x\leq^{*} y)$   
\end{itemize}
\end{defi}
Other axioms are mentioned in \cite{dinidini}*{\S4}, but these are derivable in $\M$.  
The variables are not specified for $\textsf{I}^{\omega}$ in \cite{dinidini}, and we have chosen the version from $\IST$. 
We have also added `nonstandard' to the description of the axiom $\textsf{M}^{\omega}$ to distinguish it from the semi-constructive axiom $\MP$, known as `Markov's principle', as follows: 
\be\tag{$\MP$}
(\forall f^{1})\big[ \neg\neg\big[ (\exists n)f(n)=0 \big]\di  (\exists n)f(n)=0\big].
\ee
Now, $\textsf{M}^{\omega}$ implies $\MP$ with all quantifiers relative to `st', which explains the name. 

\subsection{Notations in $\M$}
In this section, we introduce notations relating to $\M$.  

\smallskip

First of all, 
we will use the usual notations for rational and real numbers and functions as introduced in \cite{kohlenbach2}*{p.\ 288-289} (and \cite{simpson2}*{I.8.1} for the former).  
\begin{defi}[Real numbers and related notions in $\RCAo$]\label{keepintireal}\rm~
\begin{itemize}
\item Natural numbers correspond to type zero objects, and we use `$n^{0}$' and `$n\in \N$' interchangeably.  Rational numbers are defined as signed quotients of natural numbers, and `$q\in \Q$' and `$<_{\Q}$' have their usual meaning.    
\item Real numbers are coded by fast-converging Cauchy sequences $q_{(\cdot)}:\N\di \Q$, i.e.\  such that $(\forall n^{0}, i^{0})(|q_{n}-q_{n+i})|<_{\Q} \frac{1}{2^{n}})$.  
We use Kohlenbach's `hat function' from \cite{kohlenbach2}*{p.\ 289} to guarantee that every $f^{1}$ defines a real number.  
\item We write `$x\in \R$' to express that $x^{1}:=(q^{1}_{(\cdot)})$ represents a real as in the previous item and write $[x](k):=q_{k}$ for the $k$-th approximation of $x$.    
\item Two reals $x, y$ represented by $q_{(\cdot)}$ and $r_{(\cdot)}$ are \emph{equal}, denoted $x=_{\R}y$, if $(\forall n^{0})(|q_{n}-r_{n}|\leq \frac{1}{2^{n-1}})$. The inequality `$<_{\R}$' is defined similarly.         
\item Functions $F:\R\di \R$ mapping reals to reals are represented by $\Phi^{1\di 1}$ mapping equal reals to equal reals, i.e. $(\forall x, y)(x=_{\R}y\di \Phi(x)=_{\R}\Phi(y))$.
\item Sets of type $\rho$ objects $X^{\rho\di 0}, Y^{\rho\di 0}, \dots$ are given by their characteristic functions $f^{\rho\di 0}_{X}$, i.e.\ $(\forall x^{\rho})[x\in X\asa f_{X}(x)=_{0}1]$, where $f_{X}^{\rho\di 0}\leq_{\rho\di 0}1$.  
\end{itemize}
\end{defi}
\noindent
Secondly, we use the usual extensional notion of equality.  
\begin{rem}[Equality]\label{equ}\rm
The system $\M$ includes equality between natural numbers `$=_{0}$' as a primitive.  Equality `$=_{\tau}$' for type $\tau$-objects $x,y$ is then defined as:
\be\label{aparth}
[x=_{\tau}y] \equiv (\forall z_{1}^{\tau_{1}}\dots z_{k}^{\tau_{k}})[xz_{1}\dots z_{k}=_{0}yz_{1}\dots z_{k}]
\ee
if the type $\tau$ is composed as $\tau\equiv(\tau_{1}\di \dots\di \tau_{k}\di 0)$.  The inequality `$\leq_{\tau}$' is just \eqref{aparth} with `$\leq_{0}$', i.e.\ binary sequences are given by $f\leq_{1}1$, which we also denote as `$f\in C$' or `$f\in 2^{\N}$'.  
We define `approximate equality $\approx_{\tau}$' as follows:
\be\label{aparth2}
[x\approx_{\tau}y] \equiv (\forall^{\st} z_{1}^{\tau_{1}}\dots z_{k}^{\tau_{k}})[xz_{1}\dots z_{k}=_{0}yz_{1}\dots z_{k}]
\ee
with the type $\tau$ as above.  
The system $\M$ includes the \emph{axiom of extensionality}: 
\be\label{EXT}\tag{$\textsf{\textup{E}}_{\rho\di \tau}$}  
(\forall \varphi^{\rho\di \tau})(\forall  x^{\rho},y^{\rho}) \big[x=_{\rho} y \di \varphi(x)=_{\tau}\varphi(y)   \big].
\ee
for all finite types.  We write $(\textsf{E})$ for the collection of all axioms \eqref{EXT}.
\end{rem}
Finally, we introduce some notation to handle finite sequences nicely.  
\begin{nota}[Finite sequences]\label{skim}\rm
We assume the usual coding of finite sequences of objects of the same type.  
We denote by `$|s|=n$' the length of the finite sequence $s=\langle s_{0}^{\rho},s_{1}^{\rho},\dots,s_{n-1}^{\rho}\rangle$, where $|\langle\rangle|=0$, i.e.\ the empty sequence has length zero.
For sequences $s, t$ of the same type, we denote by `$s*t$' the concatenation of $s$ and $t$, i.e.\ $(s*t)(i)=s(i)$ for $i<|s|$ and $(s*t)(j)=t(|s|-j)$ for $|s|\leq j< |s|+|t|$. For a finite sequence $s$, we define $\overline{s}N:=\langle s(0), s(1), \dots,  s(N-1)\rangle $ for $N^{0}<|s|$.  
For a sequence $\alpha^{0\di \rho}$, we also write $\overline{\alpha}N=\langle \alpha(0), \alpha(1),\dots, \alpha(N-1)\rangle$ for \emph{any} $N^{0}$.  
\end{nota}
Finally, we discuss our use of the term `intuitionistic'.  
\begin{rem}\label{jezus}\rm
As already pointed out in Footnote \ref{haha}, we interpret `intuitionistic' to mean `part of Brouwer's intuitionistic mathematics', due to the first keyword of \cite{dinidini} being `intuitionism'.  
This term also has a more loose interpretation, not uncommon in functional interpretations, meaning that the system is based on intuitionistic logic, i.e.\ does not include all of classical logic. 
These kind of systems are sometimes more correctly called \emph{semi-intuitionistic}.  Due to the topic of \cite{dinidini}, the authors seem to have had the second meaning in mind.     
Nonetheless, our results below show that both senses of the term `intuitionistic' are not completely compatible with $\M$, i.e.\ there is good reason to claim that $\M$ is `merely' non-classical. 
\end{rem}
I thank the referee for pointing out the content of the previous remark.  
\section{Main results}\label{main}
\noindent
Our main results fall into three main categories, as follows.
\begin{enumerate}
\item[(i)] In answer to (Q1), we show in Section \ref{scat} that $\M$ is inconsistent with certain (very) weak fragments of \emph{Transfer}, but (oddly) not with others.  
\item[(ii)] In answer to (Q2), we show in Section \ref{scatto} that $\M$ is inconsistent with certain intuitionistic axioms, relative to the standard world; we also show that $\M$ does prove \emph{weak K\"onig's lemma}, relative to the standard world.  
\item[(iii)] Inspired by these answers to (Q1) and (Q2), we show in Section \ref{nsansa} that $\M$ involves a highly elementary \emph{standard part map}.
\end{enumerate}
In light of the first two items, it seems that $\M$ is not really a system of \emph{intuitionistic} arithmetic (but non-classical nonetheless), while the third item shows that $\M$ already pushes the
boundary of what still counts as `Nonstandard Analysis'.    

\subsection{Non-classical aspects of the Dinis-Gaspar system}\label{scat}
We provide a partial answer to question (Q1) from Section \ref{introke} by showing that $\M$ is inconsistent with various weak fragments of \emph{Transfer}, including \emph{parameter-free Transfer} from \cite{bennosam}, and the Turing jump functional $\exists^{2}$ from e.g.\ \cite{kohlenbach2}, relative to the standard world. 
\subsubsection{Parameter-free Transfer}\label{firstlasteternity}
We show that various extensions of $\M$, also involving intuitionistic axioms, are inconsistent with \emph{parameter-free Transfer} as follows.  
\begin{princ}[$\PFTPE$]
For internal $\varphi(\underline{x})$ with all free variables shown, we have
\be\label{trakke}
(\exists \underline{x})\varphi(\underline{x})\di (\exists^{\st} \underline{x})\varphi(\underline{x}).
\ee
\end{princ}
\noindent
To be absolutely clear, (standard) parameters are \textbf{not} allowed in $\varphi(\underline{x})$ as in \eqref{trakke}.    

\smallskip

In contrast to richer fragments of \emph{Transfer}, $\PFTPE$ is weak: when added to (fragments of) the classical system from \cite{brie}, one obtains a conservative extension, by \cite{bennosam}*{\S3.2}.
The results in \cite{bennosam, samcie18} establish that $\PFTPE$ yields a smooth development of the (classical) \emph{Reverse Mathematics} of Nonstandard Analysis.  

\smallskip

We point out that certain fragments of the axiom of choice (including $\QFAC^{2,0}$ as in the next theorem) are widely accepted in constructive and intuitionistic mathematics (see e.g.\ \cite{beeson1}).   
We also recall \emph{Markov's principle} $\MP$ introduced after Definition \ref{nba} and note that
$\MP$ is rejected in intuitionistic mathematics (\cite{troeleke1}*{p.\ 237}).  
\begin{thm}\label{imkens}
The system $\M+\PFTPE+\QFAC^{2,0}+\textsf{\textup{MP}}$ is inconsistent.  
\end{thm}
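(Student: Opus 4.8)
The plan is to show that the three added principles jointly yield an \emph{effectively parametrised} form of Transfer, and then to collide this with the nonstandard objects that $\M$ itself guarantees. First I would record the one piece of genuinely nonstandard data that $\M$ supplies for free: applying Idealisation $\textsf{I}^{\omega}$ to the internal formula $\phi(x^{0},y^{0})\equiv (y\leq_{0}x)$, whose premise holds by taking $x:=z$ for each standard $z$, produces some $\omega^{0}$ with $(\forall^{\st}n^{0})(n\leq_{0}\omega)$. This $\omega$ cannot be standard: otherwise axiom (c) makes the successor functional standard, axiom (d) then makes $\omega+1$ standard, and instantiating $(\forall^{\st}n)(n\leq_{0}\omega)$ at $n:=\omega+1$ gives $\omega+1\leq_{0}\omega$, which is absurd. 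So $\M$ proves $(\exists \omega^{0})\big[\neg\st(\omega)\wedge(\forall^{\st}n^{0})(n\leq_{0}\omega)\big]$.

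The heart of the argument is to upgrade the \emph{parameter-free} principle $\PFTPE$ into a usable transfer statement. The obstacle is exactly the prohibition on parameters in \eqref{trakke}: one cannot simply insert a fixed standard $f^{1}$ into $\varphi$. The device for bypassing this is $\QFAC^{2,0}$, which converts a true $(\forall \varphi^{2})(\exists n^{0})A_{qf}(\varphi,n)$ into a single internal functional $F^{2\di 0}$ with $(\forall \varphi^{2})A_{qf}(\varphi,F\varphi)$; by quantifying over the type-two argument one can absorb the would-be parameter into a functional and so keep the existential block to which $\PFTPE$ applies genuinely parameter-free. Because $\M$ is built on intuitionistic logic, the relevant instances $(\exists n)A_{qf}$ are initially available only in doubly negated form, and this is precisely where $\MP$ is spent: it strips the double negation and returns the actual witness demanded both by $\QFAC^{2,0}$ and by the hypothesis of $\PFTPE$. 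Carrying this through, I would extract from $\PFTPE$ a standard witness governed by $F$, i.e.\ the parametrised content of $\paai$ in disguise.

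The final step routes the nonstandard datum of the first paragraph through this machine. Here one must be careful, since any \emph{true} internal parameter-free statement already has a standard (indeed definable) witness, so the contradiction cannot come from $\varphi$ alone; the nonstandard dependence must be hidden inside the $\QFAC^{2,0}$-functional $F$ and only revealed afterwards. Concretely, the standard witness $n_{0}$ returned by $\PFTPE$ is, by the defining property of $F$ together with the choice of $\omega$, forced either to dominate or to be dominated by the nonstandard datum. Invoking the closure axiom (b) (with (c)) then collapses the standard/nonstandard distinction, contradicting $\neg\st(\omega)$ from the first paragraph; this is the desired inconsistency.

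I expect the main obstacle to be the bridging step of the second paragraph: arranging the quantifier-free matrix $A_{qf}$ and the existential tuple in \eqref{trakke} so that simultaneously (i) the hypothesis of $\PFTPE$ is provable inside intuitionistic $\M$ once $\MP$ has been applied, and (ii) the standard witness delivered is the honest parametrised-Transfer witness rather than a vacuous one. In other words, the real work is to verify that the type-two domain of $\QFAC^{2,0}$ is exactly what is needed to launder the parameter, and that $\MP$ furnishes the constructive content intuitionistic logic withholds; once this laundering is in place, the collision with $\omega$ via axioms (b)--(c) is routine.
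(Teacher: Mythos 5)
There is a genuine gap: your second paragraph is where the whole proof lives, and you explicitly leave it open. The one thing the argument cannot do without is the choice of the internal, parameter-free existential statement to feed into $\PFTPE$, and you never produce it. The paper's choice is the axiom of extensionality $(\textsf{E}_{2})$, which is already an axiom of $\M$: by $\MP$ it yields $(\forall Y^{2},f^{1},g^{1})(\exists N^{0})[\overline{f}N=\overline{g}N\di Y(f)=Y(g)]$, then $\QFAC^{2,0}$ Skolemizes the $N$ into a functional $\Phi_{0}$, and since $Y,f,g$ are quantified \emph{inside}, the resulting statement $(\exists\Phi_{0})(\forall Y,f,g)(\exists N\leq\Phi_{0}(Y,f,g))[\ldots]$ really is parameter-free, so $\PFTPE$ makes $\Phi_{0}$ standard and one obtains standard extensionality $(\textsf{E}_{2})^{\st}$. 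Your stated target --- ``the parametrised content of $\paai$ in disguise'' --- is moreover the wrong one: the paper shows (Corollary \ref{hessence}) that the Turing-jump content of \emph{Transfer} is \emph{consistent} with $\M$ and that the culprit is precisely $(\textsf{E}_{2})^{\st}$; and there is no visible route from $\M+\PFTPE+\QFAC^{2,0}+\MP$ to $\paai$ itself without further input such as $(\exists^{2})$ (cf.\ Corollary \ref{Vasjdortrack}).

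The final collision is also misdirected. You propose to contradict $\neg\st(\omega)$ by using axiom (b) to make $\omega$ standard, which would require trapping $\omega$ below a standard number; nothing in your setup forces that, and you give no mechanism for it. The paper's contradiction runs the other way: axiom (b) makes ``too many'' objects standard, namely the functional $Y_{0}$ defined by $Y_{0}(f)=0$ if $(\exists n\leq N+1)(f(n)=0)$ and $1$ otherwise (with $N$ nonstandard), which is standard because $Y_{0}\leq_{2}^{*}1$, together with the standard sequences $f_{0}=11\ldots$ and $g_{0}=\overline{f_{0}}N*00\ldots$, which satisfy $f_{0}\approx_{1}g_{0}$ yet $Y_{0}(f_{0})\ne Y_{0}(g_{0})$, refuting $(\textsf{E}_{2})^{\st}$. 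Your first paragraph (existence of a nonstandard $\omega$ via $\textsf{I}^{\omega}$) is correct but peripheral to the actual argument. As it stands the proposal is a plan whose decisive steps are, by your own admission, still missing.
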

\begin{proof}
Recall that $\M$ includes the axiom of extensionality $(\textsf{E}_{2})$, which implies 
\[
(\forall Y^{2}, f^{1}, g^{1})(\exists N^{0})[ \overline{f}N=\overline{g}N\di Y(f)=Y(g)], 
\]
by Markov's principle $\MP$.  Applying $\QFAC^{2,0}$, we obtain $\Phi_{0}^{2\di 0}$ such that 
\[
(\forall Y^{2}, f^{1}, g^{1})(\exists N^{0}\leq \Phi_{0}(Y, f, g))[ \overline{f}N=\overline{g}N\di Y(f)=Y(g)]. 
\]
Applying $\PFTPE$, there is standard such $\Phi_{0}$, yielding that 
\be\label{miethaan}
(\forall^{\st} Y^{2}, f^{1}, g^{1})[f\approx_{1} g\di Y(f)=Y(g)], 
\ee
since $\Phi_{0}(Y, f, g)$ is standard for standard inputs.  Note that \eqref{miethaan} is $(\textsf{E}_{2})^{\st}$, i.e.\ the axiom of `standard extensionality'.   
Now consider the functional $Y_{0}^{2}$ defined as:
\be\label{toxxx}
Y_{0}(f):=
\begin{cases}
0 & (\exists n\leq N+1 )(f(n)= 0) \\
1 & \textup{otherwise}
\end{cases},
\ee
where $N^{0}$ is nonstandard.  Since $Y^{2}_{0}\leq_{2}^{*}1$, and the constant-one-mapping of type two is standard, $Y_{0}$ is also standard by item (b) in the \emph{standardness axioms}.  
Hence, $Y_{0}$ satisfies \eqref{miethaan} and now consider $f_{0}:=11\dots$ and $g_{0}:=\overline{f_{0}}N*00\dots$, which satisfy $f_{0}\approx_{1}g_{0}$ and $Y(f_{0})=0\ne 1=Y(g_{0})$.  
Note that $g_{0}$ is standard by the aforementioned item (b), as $g_{0}\leq_{1}^{*}1$ and the constant-one-mapping of type one is standard, i.e.\ \eqref{miethaan} yields a contradiction. 
\end{proof}
Next, we show that the previous proof also goes through using a \emph{fragment} of Markov's principle $\MP$, called \emph{weak Markov's principle} ($\WMP$ for short; see \cite{ishi1}).  
Most importantly for us, $\WMP$ \emph{is} accepted in intuitionistic mathematics (but not in Bishop's constructive mathematics by the results in \cite{kookje}). 
We will actually use the following\footnote{Note that $\SE$ is actually \emph{weaker} than $\WMP$, but $\SE\asa\WMP$ by \cite{ishibrouwt}*{Thm.~11} in Bishop's constructive mathematics \emph{plus} a non-trivial fragment of the axiom of choice.  
} version of $\WMP$, defined in \cite{ishibrouwt}:   
\be\tag{$\SE$}
(\forall Y^{2}, f^{1}, g^{1})(Y(f)\ne_{0} Y(g)\di f\ne_{1}g ).  
\ee
Since `$x\ne y$' is generally a stronger statement than `$\neg(x=y)$' in constructive mathematics, $\SE$ is said to express \emph{strong extensionality}.  
\begin{cor}
The system $\M+\PFTPE+\QFAC^{2,0}+\SE$ is inconsistent. 
\end{cor}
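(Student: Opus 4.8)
The plan is to re-run the proof of Theorem~\ref{imkens} almost verbatim, replacing its single appeal to Markov's principle by an appeal to $\SE$. Recall that in that proof, $\MP$ was used for exactly one purpose: to pass from the contrapositive of extensionality, $Y(f)\ne_{0}Y(g)\di \neg(f=_{1}g)$, to the \emph{positive} implication $Y(f)\ne_{0}Y(g)\di(\exists n)(f(n)\ne g(n))$, so that a witness $N:=n+1$ could be extracted with $\overline{f}N\ne\overline{g}N$. Now $\SE$ supplies precisely this positive implication outright: by definition $Y(f)\ne_{0}Y(g)\di f\ne_{1}g$, and $f\ne_{1}g$ unpacks to $(\exists n)(f(n)\ne g(n))$. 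Thus $\SE$ can play the exact role that extensionality-together-with-$\MP$ played before, and it is in fact a weaker hypothesis.

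Concretely, I would first derive, using $\SE$ in place of extensionality plus $\MP$, the statement
\[
(\forall Y^{2}, f^{1}, g^{1})(\exists N^{0})[\,\overline{f}N=\overline{g}N\di Y(f)=Y(g)\,].
\]
Fix $Y,f,g$ and argue by the decidable case distinction on $Y(f)=_{0}Y(g)$, which is available in $\M$ since equality on $\N$ is decidable. If $Y(f)=Y(g)$, take $N:=0$, so the consequent holds and the implication is trivial. If $Y(f)\ne_{0}Y(g)$, then $\SE$ yields some $n$ with $f(n)\ne g(n)$; taking $N:=n+1$ makes the antecedent $\overline{f}N=\overline{g}N$ false, so the implication holds vacuously. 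This is exactly the starting point of the proof of Theorem~\ref{imkens}.

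From here the argument is identical. Applying $\QFAC^{2,0}$ produces a functional $\Phi_{0}^{2\di 0}$ bounding the least such $N$, and $\PFTPE$ upgrades $\Phi_{0}$ to a standard object, whence standard extensionality \eqref{miethaan} follows since $\Phi_{0}(Y,f,g)$ is standard on standard inputs. The contradiction is then obtained exactly as before, via the functional $Y_{0}$ of \eqref{toxxx} together with $f_{0}:=11\dots$ and $g_{0}:=\overline{f_{0}}N*00\dots$, both standard by item~(b) of the standardness axioms, which satisfy $f_{0}\approx_{1}g_{0}$ yet $Y_{0}(f_{0})\ne Y_{0}(g_{0})$. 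I expect no real obstacle here; the only point deserving care is the reading of $f\ne_{1}g$ as the positive inequality $(\exists n)(f(n)\ne g(n))$ rather than $\neg(f=_{1}g)$, since it is precisely this positivity that furnishes the witness $N$ and so makes the subsequent application of $\QFAC^{2,0}$ possible without any further use of Markov's principle.
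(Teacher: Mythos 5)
Your proposal is correct and follows essentially the same route as the paper: the paper's proof likewise extracts from $\SE$ the witnessed statement $(\forall Y^{2},f^{1},g^{1})(\exists N^{0})(Y(f)\ne_{0}Y(g)\di \overline{f}N\ne_{0}\overline{g}N)$ (the contrapositive of your form, equivalent here since all atoms are decidable) and then reruns the proof of Theorem~\ref{imkens} to obtain $(\textsf{E}_{2})^{\st}$ and the contradiction via $Y_{0}$. Your explicit case distinction and your emphasis on reading $f\ne_{1}g$ as the positive inequality are exactly the points the paper leaves implicit.
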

\begin{proof}
Note that $\SE$ implies the following by considering the \emph{least} such $N$: 
\[
(\forall Y^{2}, f^{1}, g^{1})(\exists N^{0})(Y(f)\ne_{0} Y(g)\di \overline{f}N\ne_{0}\overline{g}N )
\]
As for the theorem, one derives $(\textsf{E}_{2})^{\st}$ and $Y_{0}$ yields a contradiction.  
\end{proof}
The inconsistency in the theorem also pops up when combining $\PFTPE$ with \emph{intuitionistic} axioms, like the \emph{intuitionistic fan functional} (\cite{kohlenbach2, troelstra1}) as follows.
\be\tag{$\MUC$}
(\exists \Omega^{3})(\forall Y^{2})(\forall f, g\leq_{1}1)(\overline{f}\Omega(Y)=\overline{g}\Omega(Y)\di Y(f)=Y(g) )
\ee
\begin{cor}
The system $\M+\PFTPE+\MUC$ is inconsistent.  
\end{cor}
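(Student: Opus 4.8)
The plan is to observe that $\MUC$ simply \emph{hands us} the fan functional $\Omega^{3}$, thereby replacing, in one stroke, the combination of $\MP$ and $\QFAC^{2,0}$ used in Theorem~\ref{imkens} to manufacture a modulus-of-continuity functional. The crucial syntactic point is that $\MUC$ has the shape $(\exists \Omega^{3})\varphi(\Omega)$, where
\[
\varphi(\Omega)\equiv (\forall Y^{2})(\forall f, g\leq_{1}1)\big(\overline{f}\Omega(Y)=\overline{g}\Omega(Y)\di Y(f)=Y(g)\big)
\]
is \emph{internal} and has $\Omega$ as its only free variable. Hence $\PFTPE$ applies verbatim, and the first step is to extract from it a \emph{standard} $\Omega_{0}^{3}$ satisfying $\varphi(\Omega_{0})$.

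Next I would reuse, unchanged, the functional $Y_{0}^{2}$ from~\eqref{toxxx} defined with a fixed nonstandard $N^{0}$. Exactly as recorded in the proof of Theorem~\ref{imkens}, we have $Y_{0}\leq_{2}^{*}1$ while the constant-one-functional of type two is standard, so item~(b) of the standardness axioms forces $Y_{0}$ to be standard. Since $\Omega_{0}$ and $Y_{0}$ are now both standard, item~(d) yields that $M:=\Omega_{0}(Y_{0})$ is a standard natural number; as $N$ is nonstandard, we conclude $M<N$. The final step is then to instantiate the (internal) property $\varphi(\Omega_{0})$ at $Y_{0}$ together with the pair $f_{0}:=11\dots$ and $g_{0}:=\overline{f_{0}}N*00\dots$ from the proof of Theorem~\ref{imkens}. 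Both lie in Cantor space, and they agree on their first $N$ entries, hence on their first $M$ entries, so $\overline{f_{0}}M=\overline{g_{0}}M$; yet $Y_{0}(f_{0})=1$ (since $f_{0}$ is never $0$) while $Y_{0}(g_{0})=0$ (since $g_{0}(N)=0$ and $N\leq N+1$). This contradicts $\varphi(\Omega_{0})$ applied with modulus $\Omega_{0}(Y_{0})=M$, completing the argument.

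I do not expect a genuine obstacle here, as the combinatorial heart is already supplied by Theorem~\ref{imkens}; the whole corollary is essentially that theorem with $\MUC$ short-circuiting the derivation of a standard modulus. The one point deserving care — and the only place the nonstandard framework does any work — is the standardness of $M=\Omega_{0}(Y_{0})$, which rests on applying items~(b) and~(d) to $Y_{0}$ and $\Omega_{0}$ respectively. It is worth flagging that, in contrast to Theorem~\ref{imkens}, \emph{no} appeal to the standardness of $f_{0}$ or $g_{0}$ is required: the surviving fan-functional property is fully internal and quantifies over \emph{all} $f,g\leq_{1}1$, so the counterexample pair may be chosen entirely freely.
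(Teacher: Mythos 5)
Your proposal is correct and follows essentially the same route as the paper: apply $\PFTPE$ to the sentence $\MUC$ to obtain a standard $\Omega_{0}$, note that $\Omega_{0}(Y_{0})$ is then a standard number (via items (b) and (d) of the standardness axioms), and derive the contradiction from $f_{0},g_{0}$ agreeing up to the nonstandard $N$ yet receiving different $Y_{0}$-values. Your added observation that the fan-functional property is internal and quantifies over all $f,g\leq_{1}1$, so no standardness of $f_{0},g_{0}$ is needed, is a correct and slightly sharper reading of the same argument.
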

\begin{proof}
Since $\MUC$ is a sentence, $\PFTPE$ guarantees the existence of a standard $\Omega^{3}$ as in the former.  
Now consider $Y_{0}, f_{0}, g_{0}$ from the proof of the theorem and note that $\Omega(Y_{0})$ is a standard number.  Hence, since $f_{0}\approx_{1}g_{0}$ by definition, we have $0=Y(f_{0})=Y(g_{0})=1$, a contradiction.  
\end{proof}
We note that the inconsistency of $\M$ with \emph{much stronger} fragments of \emph{Transfer} is proved in \cite{dinidini}*{Theorem 29}.  In particular, \emph{Transfer} for $\Pi_{1}^{0}$-formulas is used in the latter, which readily translates to the \emph{Turing jump functional} $\exists^{2}$ in the systems from \cite{brie}.  
As it happens, we study $\exists^{2}$ in the next section.  

\smallskip

We do not know whether the classical contraposition of \eqref{trakke} also leads to inconsistency, but we now 
show that it implies the fan theorem, as follows.  
\be\tag{$\FAN$}  
 (\forall T^{1}\leq 1)\big[  (\forall \alpha\leq_{1}1)(\exists m^{0})(\overline{\alpha}m\not\in T)\di  (\exists n^{0})(\forall \beta\leq 1)(\overline{\beta}n\not\in T)  \big]
 \ee 
 The variable `$T^{1}$' is reserved for trees, while `$T\leq_{1}1$' means that $T$ is a binary tree. 
\begin{princ}[$\PFTPA$]
For internal $\varphi(\underline{x})$ with all free variables shown, we have
\be\label{trakketoo}
(\forall^{\st} \underline{x})\varphi(\underline{x})\di (\forall \underline{x})\varphi(\underline{x}).
\ee
\end{princ}
\noindent
To be absolutely clear, (standard) parameters are \textbf{not} allowed in $\varphi(\underline{x})$ as in \eqref{trakketoo}.    
\begin{thm}\label{dorfkesn}
The system $\M+\QFAC^{1,0}+\PFTPA$ proves $\FAN$.
\end{thm}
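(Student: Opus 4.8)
The plan is to use $\PFTPA$ only where it is genuinely needed, after first observing that the binary world of $\M$ is already entirely standard. Indeed, every $\alpha\leq_{1}1$ (and every binary tree $T\leq_{1}1$) is strongly majorised by the constant-one object, which is standard by item (c); hence $\alpha$ and $T$ are themselves standard by item (b) of the standardness axioms. In particular $(\forall T\leq_{1}1)$ and $(\forall^{\st}T\leq_{1}1)$ coincide, so there is no real distinction to be made at the level of trees, and I may simply fix $T\leq_{1}1$ and assume the antecedent $(\forall\alpha\leq_{1}1)(\exists m)(\overline{\alpha}m\notin T)$. Applying $\QFAC^{1,0}$ to this internal $\forall\exists$-statement yields a functional $g^{2}$ with $(\forall\alpha\leq_{1}1)(\overline{\alpha}g(\alpha)\notin T)$, i.e.\ a pointwise modulus for leaving $T$; since $T$ is a tree, the consequent of $\FAN$ is then equivalent to $g$ being bounded on the binary sequences by a standard number.

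Next I would reduce the consequent to the search for a single standard level. For \emph{standard} $n^{0}$ the initial segments $\overline{\beta}n$ (as $\beta\leq_{1}1$ varies) are exactly the $2^{n}$ binary strings of length $n$, each of which is standard; hence for standard $n$ the condition $(\forall\beta\leq_{1}1)(\overline{\beta}n\notin T)$ is a bounded decidable statement that coincides with its own $\st$-relativisation, so no Transfer is needed to pass from its standard to its full form. Thus it suffices to produce a \emph{standard} level $n$ at which $T$ has no node. Suppose, towards a contradiction, that $T$ has a node at every standard level, i.e.\ $(\forall^{\st}n)(\exists\sigma\in T)(|\sigma|=n)$. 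Feeding this into Idealisation $\textsf{I}^{\omega}$ (whose internal matrix may freely mention the parameter $T$) produces a single $\beta_{0}\leq_{1}1$ with $(\forall^{\st}m)(\overline{\beta_{0}}m\in T)$, so that $\beta_{0}$ runs through $T$ at every standard level.

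The heart of the argument, and the step I expect to be the main obstacle, is to upgrade this $\st$-infinite path $\beta_{0}$ to a genuine internal path $(\forall m)(\overline{\beta_{0}}m\in T)$; such a path contradicts the antecedent applied to $\beta_{0}$, and the resulting contradiction delivers a standard empty level, whence the uniform bound (either directly or via Realization $\textsf{R}^{\omega}$). This upgrade is precisely a $\Pi_{1}^{0}$-instance of Transfer, namely $(\forall^{\st}m)(c(m)=1)\to(\forall m)(c(m)=1)$ for the standard binary sequence $c:=\lambda m.\,T(\overline{\beta_{0}}m)$, and it is exactly here that $\PFTPA$ must be invoked. The difficulty is that $\PFTPA$ forbids parameters, whereas the Transfer I want is tied to the specific $c$ (equivalently, to $\beta_{0}$ and $T$); the plan is to absorb this by exploiting that $c$ is standard and that $g$, obtained from $\QFAC^{1,0}$, already encodes the exit behaviour uniformly, so that the relevant universal number-statement can be presented with \emph{all} its free variables displayed and $\PFTPA$ applied legitimately to it. I expect the bookkeeping needed to force this Transfer into the parameter-free format of $\PFTPA$, rather than any single nonstandard axiom of $\M$, to be the genuinely delicate point of the proof.
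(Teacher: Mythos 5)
Your preliminary observations (all binary sequences and trees are standard via items (b) and (c) of the standardness axioms, and $\QFAC^{1,0}$ turns the antecedent of $\FAN$ into a modulus functional) agree with the paper, but the core of your argument does not go through, and the step you yourself flag as ``delicate bookkeeping'' is in fact impossible rather than merely delicate. The Transfer instance you need, $(\forall^{\st}m)(c(m)=1)\di(\forall m)(c(m)=1)$ for the standard binary sequence $c:=\lambda m.T(\overline{\beta_{0}}m)$, is an instance of $\paai$ with a standard parameter that is not a closed term; since \emph{every} binary sequence is standard in $\M$, granting this for all such $c$ amounts to full $\paai$, which is \emph{inconsistent} with $\M$ by Corollary \ref{allcomingbacktome}. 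Concretely, take $T_{0}:=\{\sigma: |\sigma|\leq N\}$ for nonstandard $N$: this tree is standard (as $T_{0}\leq_{1}1$), satisfies the antecedent of $\FAN$ (every branch exits at level $N+1$), and has a node at every standard level, so your proof by contradiction would refute a configuration that $\M$ actually exhibits. The same example shows your reduction is already off target: the consequent of $\FAN$ only asks for \emph{some} level $n$ with no nodes, and for $T_{0}$ every such level is nonstandard, so it does not ``suffice to produce a standard level at which $T$ has no node'', nor is the conclusion equivalent to $g$ being bounded by a \emph{standard} number.

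The paper's proof is arranged precisely to avoid this trap. One first proves $\FAN^{\st}$ directly (no contradiction, no Idealisation): since all binary sequences are standard, the hypothesis gives $(\forall f\leq_{1}1)(\exists^{\st}n)(\overline{f}n\notin T)$, and Realization $\textsf{R}^{\omega}$ yields a standard uniform bound. One then inserts the modulus $G$ so as to obtain a statement of the form $(\forall^{\st}T\leq_{1}1, G^{2})[\varphi(T,G)]$ with $\varphi$ internal and $T,G$ its \emph{only} free variables, and applies $\PFTPA$ to this universally closed implication. After Transfer, the existential witness $n$ in the conclusion is permitted to be nonstandard, which is exactly what $T_{0}$ requires; $\QFAC^{1,0}$ then supplies $G$ from the antecedent of $\FAN$. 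In short, the legitimate use of $\PFTPA$ here is not to transfer a $\Pi^{0}_{1}$-fact about a particular standard sequence, but to transfer the whole $\FAN$-with-modulus statement over all of its parameters at once.
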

\begin{proof}
We first prove $\FAN^{\st}$.  
If $(\forall^{\st} f\leq_{1}1)(\exists^{\st}n^{0})(\overline{\alpha}n\not \in T)$, then we have $(\forall f\leq_{1}1)(\exists^{\st} n^{0})(\overline{\alpha}n\not \in T)$ since all binary sequences are standard by item (b) of the nonstandard axioms.  
Applying $\textsf{R}^{\omega}$, we obtain $(\exists^{\st}k^{0})(\forall f\leq_{1}1)(\exists n^{0}\leq k)(\overline{\alpha}n\not \in T)$, and $\FAN^{\st}$ follows. 
The latter immediately implies that
\[
 (\forall^{\st} T^{1}\leq 1, G^{2})\big[  (\forall^{\st} \alpha\leq_{1}1)(\exists m^{0}\leq G(\alpha))(\overline{\alpha}m\not\in T)\di  (\exists^{\st} n^{0})(\forall \beta\leq 1)(\overline{\beta}n\not\in T)  \big].
\]
Now drop the `st' predicates inside the square brackets and apply $\PFTPA$.  The resulting formula then yields $\FAN$, thanks to $\QFAC^{1,0}$.
\end{proof}
Note that $\QFAC^{1,0}$ is `innocent' in that it is included in the base theory of higher-order Reverse Mathematics (see \cite{kohlenbach2}).  
Next, we show that $\PFTPA$ leads to the `full' Heine-Borel compactness of the Cantor space (for uncountable covers), as in:
\be\tag{$\HBU_{C}$}
 (\forall G^{2})(\exists \langle\beta_{0}, \dots, \beta_{k} \rangle )(\forall \alpha\leq 1)(\exists i\leq k)(\alpha \in  [\overline{\beta_{i}} G(\beta_{i}))]).
\ee
Intuitively, any functional $G^{2}$ gives rise to the `canonical' cover $\cup_{f\in C}[\overline{f}G(f)]$ of the Cantor space, and $\HBU_{C}$ tells us that the 
latter always has a finite sub-cover.  
\begin{thm}
The system $\M+\PFTPA$ proves $\HBU_{C}$.
\end{thm}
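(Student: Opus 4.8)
The plan is to verify the hypothesis of $\PFTPA$ for the internal matrix of $\HBU_{C}$ and then transfer. Write $\varphi(G)$ for the internal, parameter-free formula $(\exists\langle\beta_{0},\dots,\beta_{k}\rangle)(\forall\alpha\leq 1)(\exists i\leq k)(\alpha\in[\overline{\beta_{i}}G(\beta_{i})])$, so that $\HBU_{C}$ is literally $(\forall G^{2})\varphi(G)$ and $G$ is the only free variable of $\varphi$. Since $\varphi$ is internal and carries no (standard) parameters, $\PFTPA$ reduces the theorem to proving $(\forall^{\st}G^{2})\varphi(G)$: it suffices to exhibit, for each \emph{standard} $G$, a single coded finite sequence---\emph{of possibly nonstandard length}---whose associated basic neighborhoods cover $2^{\N}$.

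First I would fix a standard $G^{2}$ together with a nonstandard $N^{0}$, which is available in $\M$ (as already used in the proof of Theorem \ref{imkens}). Enumerating the length-$N$ binary strings as $s_{0},\dots,s_{2^{N}-1}$ by a term of G\"odel's $\T$, I set $\beta_{i}:=s_{i}*00\dots$ and propose the hyperfinite sequence $\langle\beta_{0},\dots,\beta_{2^{N}-1}\rangle$ as the witness for $\varphi(G)$. The crucial standardness bookkeeping is this: each $\beta_{i}$ is binary, hence $\beta_{i}\leq_{1}^{*}1$, so by items (b)--(c) of the standardness axioms in Definition \ref{debs} each $\beta_{i}$ is standard; applying item (d) to the standard $G$ then yields that every value $G(\beta_{i})$ is a \emph{standard} natural number, and therefore $G(\beta_{i})<N$.

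Next I would check coverage. Given any $\alpha\leq 1$, let $i$ be the index with $s_{i}=\overline{\alpha}N$, so that $\beta_{i}$ agrees with $\alpha$ on its first $N$ bits. As $G(\beta_{i})$ is standard and hence below $N$, the initial segments of length $G(\beta_{i})$ of $\beta_{i}$ and of $\alpha$ coincide, i.e.\ $\overline{\alpha}G(\beta_{i})=\overline{\beta_{i}}G(\beta_{i})$, which is exactly $\alpha\in[\overline{\beta_{i}}G(\beta_{i})]$. Since $\alpha\leq 1$ was arbitrary, the hyperfinite list witnesses the internal matrix of $\varphi(G)$, establishing $(\forall^{\st}G^{2})\varphi(G)$. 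Applying $\PFTPA$ to the internal parameter-free $\varphi$ now delivers $(\forall G^{2})\varphi(G)$, i.e.\ $\HBU_{C}$.

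The one genuinely nonstandard ingredient---and the step I expect to require the most care---is the observation that the internal existential $(\exists\langle\beta_{0},\dots,\beta_{k}\rangle)$ in $\varphi(G)$ may be witnessed by a sequence of \emph{nonstandard} length $2^{N}$: internally this is a perfectly legitimate coded finite sequence, and it is precisely the freedom to truncate at a nonstandard $N$ that simultaneously forces every $\alpha\leq 1$ into one of the neighborhoods while keeping all the $G(\beta_{i})$ standard. For nonstandard $G$ the values $G(\beta_{i})$ need no longer stay below $N$ and this direct construction collapses, which is exactly why the argument is carried out for standard $G$ and only then lifted by $\PFTPA$; note that, in contrast to Theorem \ref{dorfkesn}, no fragment of choice is needed here, since the covering sequence is produced explicitly rather than extracted.
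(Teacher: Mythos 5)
Your proposal is correct and follows essentially the same route as the paper: prove the internal matrix for all standard $G^{2}$ by taking the hyperfinite list of all length-$N$ binary strings (padded with zeros) for nonstandard $N$, use item (b) of the standardness axioms to make each $\beta_{i}$ standard and hence each $G(\beta_{i})$ standard (so below $N$), and then apply $\PFTPA$ to the parameter-free internal formula. You merely spell out the coverage step ($\overline{\alpha}G(\beta_{i})=\overline{\beta_{i}}G(\beta_{i})$) in more detail than the paper, which phrases it via $\beta_{i}\approx_{1}\alpha$.
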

\begin{proof}
Since every binary sequence is standard in $\M$, we have $(\forall \alpha\leq 1)(\exists^{\st}\beta\leq 1)(\alpha\approx_{1} \beta)$, i.e.\ the nonstandard compactness of the Cantor space.  
However, the usual proof that the latter is equivalent to $\HBU_{C}$ (see \cite{bennosam, samcie18}) does not go through in $\M$ due to the weak conclusion of $\textsf{R}^{\omega}$.  
Instead, we prove \eqref{almozt}, which immediately yields $\HBU_{C}$ via $\PFTPA$.
\be\label{almozt}
 (\forall^{\st} G^{2})(\exists \langle\beta_{0}, \dots, \beta_{k} \rangle )(\forall \alpha\leq 1)(\exists i\leq k)(\alpha \in  [\overline{\beta_{i}} G(\beta_{i}))])
\ee
To prove \eqref{almozt}, fix nonstandard $N$ and define $\beta_{i}:=\sigma*00$ where $\sigma$ is the $i$-th binary sequence of length $N$.  
Then $\langle\beta_{0}, \dots, \beta_{2^{N}} \rangle$ is as required for \eqref{almozt}, as every $\beta_{i}$ is standard by item (b) in the nonstandard axioms, and hence $G(\beta_{i})$ is standard.
Indeed, $(\forall \alpha\leq_{1}1)(\exists i\leq 2^{N})(\beta_{i}\approx_{1} \alpha)$, we are done. 
\end{proof}
As to concluding remarks, Benno van den Berg has suggested `$\varphi(x)\equiv (\exists y)(x\leq^{*}y)$' to show that $\PFTPA$ does not lead to a conservative extension of $\M$.   

\smallskip

Secondly, by \cite{dagsam}*{Cor.~6.7} and \cite{dagsamIII}*{Thm.\ 3.3}, the Turing jump functional $\exists^{2}$ from Section \ref{TJF} and $\HBU_{C}$ give rise to $\ATR_{0}$, and it is 
a natural question whether the same holds for the system $\M+\PFTPA+\QFAC^{2,1}+(\exists^{2})$. 

\smallskip
  
Thirdly, the axiom $\HBU_{C}$ is \emph{extremely hard} to prove: by \cite{dagsamIII}*{\S3.1}, $\Pi_{k}^{1}\textsf{-CA}_{0}^{\omega}$ does not prove $\HBU_{C}$ (for all $k$), where the former is $\RCAo$ plus the existence of $S^{2}_{k}$, a functional which decides the truth of $\Sigma_{k}^{1}$-formulas (only involving type one parameters).  Hence, $\M+\PFTPA$ is a rather peculiar system.  

\smallskip

Fourth, since $\M$ proves that all binary sequences are standard, the following fragment of \emph{Transfer}, introduced in \cite{dagsam}, follows trivially:
\be\tag{\textsf{\textup{WT}}}
(\forall^{\st} Y^{2})\big[ (\exists f^{1}\leq_{1}1)(Y(f)=0)\di(\exists^{\st} f^{1}\leq_{1}1)(Y(f)=0)  \big]
\ee
Note that $\textsf{WT}$ is quite strong: working in the systems from \cite{brie}, $\textsf{WT}$ gives rise to a functional $\kappa^{3}$ which computes a realiser for $\HBU_{C}$, but not vice versa (see \cite{dagsam}*{Theorem 6.17}).  
In fact, the combination of $\kappa^{3}$ and $\exists^{2}$ (from the next section), gives rise to full second-order arithmetic by \cite{dagsam}*{Rem.\ 6.13}.  

\subsubsection{The Turing jump functional}\label{TJF}
We show that the Dinis-Gaspar system is inconsistent with $(\exists^{2})$ \emph{relative to the standard world}.  The axiom $(\exists^{2})$ is given by:
\be\tag{$\exists^{2}$}
(\exists \varphi^{2})(\forall f^{1})[ (\exists n^{0})(f(n)=0)\asa \varphi(f)=0 ].
\ee
Note that $\paai\di (\exists^{2})^{\st}$ by the proof of Corollary \ref{allcomingbacktome}, where $\paai$ is:
\be\label{XxX}
(\forall^{\st}f^{1})\big[  (\forall^{\st}n)(f(n)=0)\di (\forall n)(f(n)=0)  \big], 
\ee
i.e.\ $(\exists^{2})^{\st}$ follows from a fragment of \emph{Transfer} using $\textup{\textsf{M}}^{\omega}$.
\begin{thm}\label{skimitar}
The system $\M+(\exists^{2})^{\st}$ is inconsistent.  
\end{thm}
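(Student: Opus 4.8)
The plan is to use the standard functional supplied by $(\exists^{2})^{\st}$ to \emph{define}, inside $\M$, the characteristic function of the ``standard cut'' on $\N$, and then to contradict the non-triviality of the predicate $\st$ by means of the (internal) least number principle. The enabling fact is the one exploited repeatedly in Section~\ref{scat}: every binary sequence is standard in $\M$, since $f\leq_{1}1$ gives $f\leq_{1}^{*}1$, the constant-one map is standard, and item (b) of the standardness axioms applies. The whole point is that $(\exists^{2})^{\st}$, whose scope is nominally only the \emph{standard} inputs, can nonetheless be applied to sequences depending on a nonstandard parameter, precisely because those sequences are binary and hence standard.

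First I would unwind $(\exists^{2})^{\st}$: appending $\st$ to every unbounded quantifier, it reads $(\exists^{\st}\varphi^{2})(\forall^{\st}f^{1})[(\exists^{\st}n^{0})(f(n)=0)\asa \varphi(f)=0]$, and I fix such a standard $\varphi$. For each $M^{0}$ let $g_{M}$ be the binary sequence with $g_{M}(m)=1$ for $m<M$ and $g_{M}(m)=0$ for $m\geq M$ (this is exactly the sequence $g_{0}$ from the proof of Theorem~\ref{imkens}, now with the parameter $M$ made explicit), so that the first zero of $g_{M}$ sits at position $M$. Being binary, $g_{M}$ is standard \emph{even for nonstandard $M$}, so $\varphi$ may legitimately be applied and $(\exists^{2})^{\st}$ yields $\varphi(g_{M})=0\asa(\exists^{\st}n)(n\geq M)$. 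For natural numbers, $(\exists^{\st}n)(n\geq M)$ is equivalent to $\st(M)$: if $M\leq n$ with $n$ standard then $M$ is standard by item (b) (as $M\leq_{0}^{*}n$), and conversely $n:=M$ works when $M$ is standard. Setting $\psi(M):=0$ if $\varphi(g_{M})=0$ and $\psi(M):=1$ otherwise, I thus obtain a binary function with
\be
\psi(M)=0 \ \asa\ \st(M) \qquad \text{for every } M^{0}.
\ee

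To finish, I would exhibit a place where $\psi$ takes the value $1$: Idealisation $\textsf{I}^{\omega}$ applied to the internal $\phi(x,y):\equiv y<_{0}x$ (all type $0$, where every object is monotone) produces $N$ with $(\forall^{\st}y)(y<N)$, which is therefore nonstandard, so $\psi(N)=1$. As ``$\psi(M)=1$'' is decidable and now has a witness, the internal least number principle gives a least $M_{0}$ with $\psi(M_{0})=1$. Then $\psi(M_{0})=1$ forces $\neg\st(M_{0})$; meanwhile $\psi(0)=0$ (as $0$ is standard) gives $M_{0}\geq 1$, and minimality yields $\psi(M_{0}-1)=0$, hence $\st(M_{0}-1)$. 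Since standardness of the naturals is closed under successor (items (c) and (d) applied to the successor term), $\st(M_{0}-1)$ gives $\st(M_{0})$, contradicting $\neg\st(M_{0})$.

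The main obstacle is conceptual rather than computational, and lives in the single clause ``$g_{M}$ is standard for every $M$'': this is what lets the standard-world statement $(\exists^{2})^{\st}$ decide the \emph{external} predicate $\st$ on $\N$, which is the heart of the inconsistency. The remaining care needed is purely bookkeeping about the standard cut: the least number principle must be applied to the internal (decidable) predicate ``$\psi(M)=1$'', whereas the equivalence $\psi(M)=0\asa\st(M)$ is used externally, and the step ``successor of a standard number is standard'' must be justified from the basic standardness axioms rather than assumed.
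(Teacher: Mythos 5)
Your proof is correct, but it takes a genuinely different route from the paper's. Both arguments hinge on the same enabling observation -- binary sequences are standard by item (b) no matter what parameters they are built from, so the ``standard'' functional $\varphi$ from $(\exists^{2})^{\st}$ can be fed data encoding nonstandard information. After that the paths diverge. The paper composes $\varphi$ with the term $Z^{1\di 1}$ (sending $f$ to its standard ``zero-pattern'') to obtain a $\varphi_{0}$ whose defining equivalence holds for \emph{all} $f^{1}$, then invokes the non-basic axioms $\textsf{IP}^{\omega}_{\tilde{\forall}^{\st}}$ and $\textsf{R}^{\omega}$ to extract a single standard bound $k_{0}$ on witnesses uniformly in $f$, and finally refutes this with a sequence whose first zero lies beyond $k_{0}$. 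You instead feed $\varphi$ the family $g_{M}$ and read off a term-definable characteristic function $\psi$ of the standard cut of $\N$, then contradict internal induction (via the least number principle and closure of `st' under successor, which you correctly derive from items (b), (c), (d)). Your version is more economical in its axiomatic footprint: it uses only the basic standardness axioms, internal induction, and $\textsf{I}^{\omega}$ to produce a nonstandard number, entirely avoiding $\textsf{R}^{\omega}$ and $\textsf{IP}^{\omega}_{\tilde{\forall}^{\st}}$; this localizes the inconsistency in a smaller subsystem, which is in the spirit of question (Q1). It also makes explicit a point the paper only reaches later (Remark \ref{blasphemy}): under $(\exists^{2})^{\st}$ and item (b), the predicate $\st^{0}$ would become internally definable, which is exactly the kind of collapse that is anathema to internal set theory. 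The paper's route, by contrast, stays closer to the template reused in Theorem \ref{corfefet} and exhibits how the characteristic choice/realization axioms of $\M$ participate in the clash. One small bookkeeping point in your write-up: you should note that $\psi$ is a genuine term of the language (e.g.\ $\psi(M):=\mathsf{sg}(\varphi(g_{M}))$ with $\lambda M.g_{M}$ primitive recursive), so that the least number principle for the decidable internal predicate $\psi(M)=1$ is indeed available in $\textsf{E-HA}^{\omega}$ with $\varphi$ as a parameter; as stated this is implicit but worth a sentence.
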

\begin{proof}
Define the (standard) functional $Z^{1\di 1}$ as $Z(f)(n)=0$ if $f(n)=0$, and $1$ otherwise.  
Since $Z(f)\leq_{1}^{*}1$, the binary sequence $Z(f)$ is standard \emph{for any input $f$}, due to item (b) of the \emph{standardness axioms}.  
Hence, $(\exists^2)^{\st}$ immediately yields:
\be\label{xihu}
(\exists^{\st} \varphi_{0}^{2})(\forall f^{1})[ (\exists^{\st} n^{0})(f(n)=0)\asa \varphi_{0}(f)=0 ], 
\ee
by taking $\varphi_{0}:= \varphi\circ Z$ for $\varphi$ as in $(\exists^{2})^{\st}$.  Clearly, \eqref{xihu} implies
\[
(\forall f^{1})[ \varphi_{0}(f)=0\di (\exists^{\st} n^{0})(f(n)=0) ], 
\]
and applying $\textsf{IP}^{\omega}_{\tilde{\forall}^{\st}}$ yields
\[
(\forall f^{1})(\exists^{\st}m^{0})[ \varphi_{0}(f)=0\di (\exists n^{0}\leq m)(f(n)=0) ], 
\]
while applying $\textsf{R}^{\omega}$ yields:
\be\label{koppel}
(\exists^{\st}k^{0})(\forall f^{1})(\exists m^{0}\leq k)[ \varphi_{0}(f)=0\di (\exists n^{0}\leq m)(f(n)=0) ].  
\ee
Now let $k_{0}$ be a standard number as in \eqref{koppel} and define $f_{0}^{1}$ as $f_{0}(i)=1$ for $i\leq k_{0}+1$, and $0$ otherwise.  
Clearly, $\varphi_{0}(f_{0})=0$ by \eqref{xihu}, but this contradicts \eqref{koppel}.
\end{proof}
The following corollary also follows from the proof of \cite{dinidini}*{Theorem 29}.  
\begin{cor}\label{allcomingbacktome}
The system $\M+\paai$ is inconsistent.  
\end{cor}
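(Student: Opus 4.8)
The plan is to show that $\M+\paai$ proves $(\exists^{2})^{\st}$ and then to invoke Theorem~\ref{skimitar}, which already gives that $\M+(\exists^{2})^{\st}$ is inconsistent. Thus the whole weight of the argument rests on the implication $\paai\di (\exists^{2})^{\st}$, to be carried out inside $\M$ with $\textsf{M}^{\omega}$ as the essential tool; this is exactly the reading suggested by the note preceding the statement.

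First I would establish the following \emph{collapse} for the standard world: for any standard $f^{1}$,
\be
(\exists n^{0})(f(n)=0)\di (\exists^{\st}n^{0})(f(n)=0).
\ee
To this end, given standard $f$, define $h^{1}$ by $h(n):=1$ if $(\exists m\leq n)(f(m)=0)$ and $h(n):=0$ otherwise; note $h$ is standard, being binary (so $h\leq_{1}^{*}1$ and item (b) of the standardness axioms applies). Applying $\paai$ to $h$ gives $(\forall^{\st}n)(h(n)=0)\di(\forall n)(h(n)=0)$. If $(\exists n)(f(n)=0)$, then $(\exists n)(h(n)=1)$, i.e.\ $\neg(\forall n)(h(n)=0)$, and modus tollens yields $\neg(\forall^{\st}n)(h(n)=0)$. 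I would then feed this into $\textsf{M}^{\omega}$ with $\phi(n)\equiv (h(n)=0)$ and $\psi\equiv (0=_{0}1)$: since every type-zero object is monotone, $\tilde{\forall}^{\st}$ and $\tilde{\exists}^{\st}$ collapse to $\forall^{\st}$ and $\exists^{\st}$, so the premise is exactly $\neg(\forall^{\st}n)(h(n)=0)$ and the conclusion provides a standard $y^{0}$ with $\neg(\forall n\leq y)(h(n)=0)$, hence a standard $n\leq y$ with $h(n)=1$, and therefore a standard $m$ with $f(m)=0$. This is the promised collapse, and it is precisely here that $\textsf{M}^{\omega}$ is indispensable, as intuitionistic logic by itself only delivers a doubly negated conclusion.

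With the collapse in hand, I would fix a nonstandard $N^{0}$ (available via $\textsf{I}^{\omega}$) and define $\varphi_{N}^{2}$ by $\varphi_{N}(f):=0$ if $(\exists n\leq N)(f(n)=0)$ and $\varphi_{N}(f):=1$ otherwise. Exactly as for $Y_{0}$ in the proof of Theorem~\ref{imkens}, one has $\varphi_{N}\leq_{2}^{*}1$, so $\varphi_{N}$ is standard by item (b) of the standardness axioms, despite its dependence on the nonstandard $N$. For standard $f$ the equivalence $\varphi_{N}(f)=0\asa(\exists^{\st}n)(f(n)=0)$ now follows: the implication $\Leftarrow$ is immediate since every standard $n$ satisfies $n<N$, while $\Rightarrow$ reads $\varphi_{N}(f)=0\di(\exists n\leq N)(f(n)=0)\di(\exists n)(f(n)=0)$ and then invokes the collapse. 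This is precisely $(\exists^{2})^{\st}$, and Theorem~\ref{skimitar} closes the argument. The main obstacle, as indicated, is the passage from $\neg(\forall^{\st}n)(h(n)=0)$ to a genuine standard witness, which is exactly the non-constructive content supplied by $\textsf{M}^{\omega}$; verifying that $\varphi_{N}$ is standard is routine given the majorizability axioms and mirrors an argument already made in the paper.
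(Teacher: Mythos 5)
Your proposal is correct and follows essentially the same route as the paper: both derive the collapse $(\exists n)(f(n)=0)\di(\exists^{\st}n)(f(n)=0)$ for standard $f$ from $\paai$ via $\textsf{M}^{\omega}$ (the paper phrases this as ``$\textsf{M}^{\omega}$ implies $\MP^{\st}$''), observe that the truncated functional (your $\varphi_{N}$, the paper's $Y_{0}$ from \eqref{toxxx}) is standard by item (b) of the standardness axioms and realises $(\exists^{2})^{\st}$, and then invoke Theorem~\ref{skimitar}. Your write-up merely makes explicit the auxiliary binary function $h$ and the type-$0$ collapse of the monotone quantifiers, details the paper leaves implicit.
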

\begin{proof}
Note that $Y_{0}$ as in \eqref{toxxx} is standard, while $\paai$ guarantees that $Y_{0}$ behaves just like $\varphi$ in $(\exists^{2})^{\st}$.
Indeed, $\textsf{M}^{\omega}$ implies $\MP^{\st}$, i.e.\ Markov's principle relative to the standard world, and $\paai$ thus implies
\[
(\forall^{\st}f^{1})\big[  (\exists n)(f(n)=0)\di (\exists^{\st} n)(f(n)=0)  \big], 
\]
which is trivially equivalent to \eqref{XxX} in classical logic. 
\end{proof}
Let $\TJ(f, \varphi)$ be the formula in square brackets in $(\exists^{2})$.  
By \cite{bennosam}*{Theorem 4.4}, we have $\paai\asa [ (\exists^{\st} \varphi^{2})(\forall^{\st}f^{1})\TJ(f, \varphi)+ (\textsf{E}_{2})^{\st}]$ over a weak classical system. 
Surprisingly, \emph{only} the final conjunct gives rise to inconsistency, which is implicit in the proof of Theorems \ref{imkens} and \ref{ravudavu}.  
\begin{cor}\label{hessence}
$\M+ (\exists^{\st} \varphi^{2})(\forall^{\st}f^{1})\TJ(f, \varphi)$ is consistent if $\textsf{\textup{E-HA}}^{\omega}+(\exists^{2})$ is.  
\end{cor}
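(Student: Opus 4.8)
The plan is to establish this relative consistency through the bounded functional interpretation (BFI) underlying $\M$, rather than by constructing a semantic model. Recall from \cite{dinidini} (and the analogy with \cite{BFI} noted in Section~\ref{introke}) that $\M$ is sound for a bounded functional interpretation into its internal fragment $\textsf{E-HA}^{\omega}$, and that this soundness is exactly what yields the conservation of $\M$ over $\textsf{E-HA}^{\omega}$. Writing $A$ for the added axiom $(\exists^{\st}\varphi^{2})(\forall^{\st}f^{1})\TJ(f,\varphi)$, it therefore suffices to check that the interpretation of $\M+A$ stays sound once the target theory is enriched to $\textsf{E-HA}^{\omega}+(\exists^{2})$. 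The relative consistency then follows in the usual way: any derivation of a contradiction in $\M+A$ would be transported by the interpretation to a derivation of a contradiction in $\textsf{E-HA}^{\omega}+(\exists^{2})$.

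Concretely, I would leave the interpretation of every axiom of $\M$ exactly as in \cite{dinidini} and only compute, and verify in $\textsf{E-HA}^{\omega}+(\exists^{2})$, the BFI of the single new axiom $A$. The standard existential $(\exists^{\st}\varphi^{2})$ calls for a witnessing functional of type $2$ together with a standard majorant. I would take $\varphi$ to be the Turing jump functional supplied by $(\exists^{2})$: since $\varphi$ takes values in $\{0,1\}$ we have $\varphi\leq_{2}^{*}\mathbf{1}$, where the constant-one functional $\mathbf{1}$ of type $2$ is a closed term, hence standard by the standardness axioms, and is monotone. Thus $\varphi$ is majorized by a standard object, so the bound demanded by the interpretation of $(\exists^{\st}\varphi^{2})$ is trivially $\mathbf{1}$, and the matrix $(\forall^{\st}f^{1})\TJ(f,\varphi)$ reduces to the purely internal statement $(\forall f^{1})\TJ(f,\varphi)$. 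The latter is nothing but the defining property of the jump functional and is therefore immediate from $(\exists^{2})$, with no appeal to choice and only the interpreted Markov principle $\textsf{M}^{\omega}$ (already part of $\M$) needed to handle the biconditional in $\TJ$.

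The conceptual point to stress is that $A$ asks only for a standard functional correctly computing the genuine, internal jump $(\exists n)(f(n)=0)$ on standard inputs $f$; crucially, it does \emph{not} assert that $\varphi$ is determined by the standard values of its argument, which is the extensionality ingredient $(\textsf{E}_{2})^{\st}$ isolated in the decomposition $\paai\asa[(\exists^{\st}\varphi^{2})(\forall^{\st}f^{1})\TJ(f,\varphi)+(\textsf{E}_{2})^{\st}]$ of \cite{bennosam}*{Theorem~4.4}. Since the BFI famously refutes extensionality, there is no tension with Theorem~\ref{skimitar}: the jump witness need not respect $\approx_{1}$, and the step in the proof of Theorem~\ref{skimitar} that forced standardness of a nonstandard bound is never reproduced by the interpretation of $A$ in isolation.

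The main obstacle is bookkeeping rather than ideas: one must check that grafting the interpretation of $A$ onto the soundness proof of \cite{dinidini} imposes no demand on the target theory beyond deciding $(\exists n)(f(n)=0)$ uniformly in $f$ — precisely the content of $(\exists^{2})$ — and, in particular, that interpreting the biconditional in $\TJ$ does not covertly require a choice or extensionality principle unavailable in $\textsf{E-HA}^{\omega}+(\exists^{2})$. Granting this routine verification, $\M+A$ is interpretable in $\textsf{E-HA}^{\omega}+(\exists^{2})$, and hence is consistent whenever the latter is.
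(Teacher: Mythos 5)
Your proposal is correct and rests on the same two pillars as the paper's argument: the soundness theorem of \cite{dinidini} with the internal axiom $(\exists^{2})$ added to the target theory, and the observation that the jump functional, once normalised to take values in $\{0,1\}$, is majorized by the (closed, hence standard and monotone) constant-one functional of type $2$. The genuine difference is where the work is done. You propose to re-open the soundness proof and compute the bounded interpretation of the new external axiom $(\exists^{\st}\varphi^{2})(\forall^{\st}f^{1})\TJ(f,\varphi)$ directly, supplying the majorant by hand; this works, but it is exactly the bookkeeping you flag as the main obstacle. The paper instead stays entirely inside the nonstandard system: it shows that $\M+(\exists^{2})$ \emph{proves} the axiom in question --- the normalised jump $\varphi_{1}$, defined by $\varphi_{1}(f)=1$ if $\varphi(f)\ne 0$ and $0$ otherwise, satisfies $\varphi_{1}\leq_{2}^{*}1$ and is therefore standard by item (b) of the standardness axioms --- and then invokes the soundness theorem of \cite{dinidini} only for the purely internal axiom $(\exists^{2})$, for which it applies off the shelf; no interpretation of an external formula ever needs to be computed. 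Two small repairs to your write-up: $(\exists^{2})$ does not literally assert that $\varphi$ is $\{0,1\}$-valued, so the normalisation step is not optional before claiming $\varphi\leq_{2}^{*}1$; and the appeal to $\textsf{M}^{\omega}$ to ``handle the biconditional'' is superfluous, since $\TJ(f,\varphi)$ is internal and internal formulas are left untouched by the interpretation (this preservation of internal formulas is precisely what the paper's proof cites).
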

\begin{proof}
Using the same trick involving $Z$ as in the theorem, $ (\exists^{\st} \varphi^{2})(\forall^{\st}f^{1})\TJ(f, \varphi)$ is equivalent to $ (\exists^{\st} \varphi^{2})(\forall f^{1})\TJ(f, \varphi)$.  
The latter follows from $(\exists^{2})$ by taking such $\varphi$ and defining $\varphi_{1}(f)=1$ if $\varphi(f)\ne 0$, and $0$ otherwise.  
Since $\varphi_{1}\leq_{2}^{*}1$, this functional is standard by item (b) of the standardness axioms.  As $(\exists^{2})$ is internal, the theorem now follows from 
the soundness theorem as in \cite{dinidini}*{Theorem 16}, since $\textsf{E-HA}_{\st}^{\omega}$ and $\textsf{E-HA}^{\omega}$ prove the same internal formulas.  
\end{proof}
\begin{cor}\label{skimitar2}
The system $\M+(\textsf{\textup{E}}_{2})^{\st}$ is inconsistent.  
\end{cor}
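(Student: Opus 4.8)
The plan is to observe that the hypothesis $(\textsf{E}_{2})^{\st}$ is \emph{verbatim} the formula \eqref{miethaan} appearing in the proof of Theorem~\ref{imkens}. There, the passage from internal extensionality to \eqref{miethaan} required $\MP$, $\QFAC^{2,0}$, and $\PFTPE$; here, by contrast, \eqref{miethaan} is handed to us directly as an axiom, so only the second half of that earlier argument is needed. Thus the proof amounts to exhibiting a single counterexample to \eqref{miethaan} inside $\M$.

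First I would reuse the functional $Y_{0}^{2}$ from \eqref{toxxx}, defined with a fixed nonstandard $N^{0}$. Since $Y_{0}\leq_{2}^{*}1$ and the constant-one functional of type two is a standard closed term, item (b) of the standardness axioms makes $Y_{0}$ standard. Next I would take $f_{0}:=11\dots$ and $g_{0}:=\overline{f_{0}}N*00\dots$; the former is a standard closed term and the latter satisfies $g_{0}\leq_{1}^{*}1$, so both are standard by item (b) again. Because $f_{0}$ and $g_{0}$ agree on all coordinates below the nonstandard index $N$ and differ only from $N$ onward, they agree on every standard coordinate, i.e.\ $f_{0}\approx_{1}g_{0}$ in the sense of \eqref{aparth2}.

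Finally I would compute $Y_{0}(f_{0})=1$ (as $f_{0}$ is everywhere nonzero) and $Y_{0}(g_{0})=0$ (as $g_{0}(N)=0$ with $N\leq N+1$), so that $Y_{0}(f_{0})\ne_{0}Y_{0}(g_{0})$ while $f_{0}\approx_{1}g_{0}$ and $Y_{0},f_{0},g_{0}$ are all standard. Instantiating \eqref{miethaan}, that is $(\textsf{E}_{2})^{\st}$, at these three standard objects yields $0=Y_{0}(f_{0})=Y_{0}(g_{0})=1$, the desired contradiction.

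There is no genuine obstacle here: the entire combinatorial content was already isolated in the proof of Theorem~\ref{imkens}, and the only point worth stressing is conceptual rather than technical. Namely, the corollary makes explicit that the inconsistency of the systems in Theorem~\ref{imkens} and its corollaries does \emph{not} depend on $\PFTPE$, $\QFAC^{2,0}$, $\MUC$, $\SE$, or any fragment of \emph{Transfer} per se; these principles served only to \emph{derive} standard extensionality $(\textsf{E}_{2})^{\st}$, and it is this axiom alone that clashes with $\M$. This is precisely the promised identification of the `real culprit' behind (Q1).
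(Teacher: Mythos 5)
Your proposal is correct and matches the paper's intended argument: the paper gives no separate proof of this corollary, remarking only that the inconsistency is ``implicit in the proof of Theorems \ref{imkens} and \ref{ravudavu},'' which is precisely the counterexample $Y_{0}, f_{0}, g_{0}$ to \eqref{miethaan} that you spell out. (A minor note: your computed values $Y_{0}(f_{0})=1$ and $Y_{0}(g_{0})=0$ are the correct ones for the definition \eqref{toxxx} --- the paper's proof of Theorem \ref{imkens} states them swapped, and your own final line briefly reverts to that swapped version, but the contradiction is unaffected either way.)
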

It should be noted that $\M$ is even inconsistent with the \emph{rule} version of the axiom $(\textsf{\textup{E}}_{2})^{\st}$.  
Indeed, $\M$ proves that $Y_{0}, f_{0}, g_{0}$ from the proof of Theorem \ref{imkens} are standard and satisfy $f_{0}\approx_{1} g_{0}$.  
However, a proof of $Y_{0}(f_{0})=Y_{0}(g_{0})$, say obtained by the aforementioned rule, then leads to a contradiction.  
\begin{cor}\label{Vasjdortrack}
The system $\M+\QFAC^{1,0}+(\exists^{2})+\PFTPE$ is inconsistent.  
\end{cor}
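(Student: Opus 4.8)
The plan is to reduce the claim to Theorem~\ref{skimitar} by deriving $(\exists^{2})^{\st}$ inside $\M+\QFAC^{1,0}+(\exists^{2})+\PFTPE$. Since $\M+(\exists^{2})^{\st}$ is already inconsistent by that theorem, exhibiting $(\exists^{2})^{\st}$ as a theorem of the (stronger) system at hand immediately yields the corollary. The delicate point is that $\PFTPE$ forbids parameters, so one must arrange all the relevant witnesses into a single parameter-free existential statement \emph{before} transferring.

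First I would Skolemise the forward half of the Turing jump while it is still parameter-free. From $(\exists^{2})$ we obtain some $\varphi^{2}$ with $(\forall f^{1})\TJ(f,\varphi)$. Since equality of naturals is decidable, the implication $\varphi(f)=0\di(\exists n)(f(n)=0)$ may be rewritten as $(\exists n)[\varphi(f)=0\di f(n)=0]$ (case on the decidable predicate $\varphi(f)=0$, taking $n=0$ when it fails), and $\QFAC^{1,0}$ then yields $\Phi^{1\di 0}$ with $(\forall f)[\varphi(f)=0\di f(\Phi(f))=0]$. Crucially, I would \emph{not} transfer $\varphi$ and $\Phi$ separately: applying $\PFTPE$ to the existence of $\Phi$ alone is illegitimate, because $\varphi$ would then occur as a standard parameter. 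Instead I bundle both witnesses, proving the parameter-free sentence
\[
(\exists\varphi^{2},\Phi^{1\di 0})(\forall f^{1})\big[\TJ(f,\varphi)\wedge(\varphi(f)=0\di f(\Phi(f))=0)\big],
\]
whose matrix is internal and which has no free variables, so that $\PFTPE$ (for tuples) applies directly and returns \emph{standard} $\varphi_{0},\Phi_{0}$ satisfying the displayed matrix for every $f$.

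It then remains to verify $(\exists^{2})^{\st}$, i.e.\ $(\exists^{\st}\varphi^{2})(\forall^{\st}f^{1})[(\exists^{\st}n)(f(n)=0)\asa\varphi(f)=0]$, using the standard $\varphi_{0}$. Fix standard $f$. The direction $(\exists^{\st}n)(f(n)=0)\di\varphi_{0}(f)=0$ is immediate from $\TJ(f,\varphi_{0})$, as a standard zero is in particular a zero. For the converse, if $\varphi_{0}(f)=0$ then $f(\Phi_{0}(f))=0$; and since $\Phi_{0}$ and $f$ are both standard, item (d) of the standardness axioms gives that $\Phi_{0}(f)$ is standard, so $(\exists^{\st}n)(f(n)=0)$ holds as witnessed by $\Phi_{0}(f)$. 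This establishes $(\exists^{2})^{\st}$, and Theorem~\ref{skimitar} supplies the contradiction. The main obstacle is exactly the parameter restriction in $\PFTPE$: the naive route of transferring $(\exists^{2})$ to a standard $\varphi_{0}$ and then transferring a separately chosen witness function fails, because that witness function depends on $\varphi_{0}$; performing the choice step before transfer and transferring the pair $(\varphi,\Phi)$ at once is what makes standardness of the witness on standard inputs reduce to axiom (d).
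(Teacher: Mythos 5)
Your proof is correct and follows essentially the same route as the paper: Skolemise $(\exists^{2})$ via $\QFAC^{1,0}$ \emph{before} transferring, bundle $\varphi$ and the witness function into a single parameter-free existential so that $\PFTPE$ applies, and then use standardness axiom (d) to get standard witnesses on standard inputs. The only (cosmetic) difference is that the paper concludes by deriving $\paai$ and citing Corollary~\ref{allcomingbacktome}, whereas you derive $(\exists^{2})^{\st}$ directly and cite Theorem~\ref{skimitar}; since Corollary~\ref{allcomingbacktome} itself reduces to Theorem~\ref{skimitar} in exactly this way, the two endings coincide.
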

\begin{proof}
Using $\QFAC^{1,0}$, $(\exists^{2})$ readily implies
\[
(\exists \varphi^{2}, \Psi^{2})(\forall f^{1})\big[ ((\exists n^{0})(f(n)=0)\di \varphi(f)=0) \wedge( \varphi(f)=0\di f(\Psi(f))=0) \big], 
\]
where $\Psi(f)$ is the least such $n$ if existent.  By $\PFTPE$, there is standard such $\Psi^{2}$, upon which we obtain $\paai$, a contradiction by Corollary \ref{allcomingbacktome}. 
\end{proof}
In conclusion, while $\M$ is inconsistent with a number of fragments of \emph{Transfer}, the inconsistency is really due to the axiom of extensionality relative to the standard world, 
and not e.g.\ the Turing jump functional as in Corollary \ref{hessence}.     
Since the axiom of extensionality is not rejected in constructive (esp.\ intuitionistic) mathematics, all we can say is that these results suggest that $\M$ is \emph{non-classical}.

\smallskip

Furthermore, $\M$ proves $\neg(\exists^{2})^{\st}$ by Theorem \ref{skimitar}, and classically $\neg(\exists^{2})$ is equivalent to the continuity of all functionals on the Baire space (\cite{kohlenbach2}*{Prop.\ 3.7}); a similar equivalence involving $\SE$ holds constructively by \cite{ishibrouwt}*{Thm.\ 26}.  
However, these equivalences use \emph{Grilliot's trick} (see \cite{kohlenbach2}*{p.\ 287}) and hence require the axiom of extensionality (in some form or other).  Thus, to derive (intuitionistic) continuity theorems from $\neg(\exists^{2})^{\st}$, one would need \emph{standard extensionality}, which leads to inconsistency by Corollary \ref{skimitar2}.  Thus, $\M$ is definitely \emph{non-classical}, but not really intuitionistic.     

\subsubsection{Arithmetical comprehension}\label{ACAS}
We show that the Dinis-Gaspar system is inconsistent with $\ACA_{0}$ \emph{relative to the standard world}.  The axiom $\ACA_{0}$ is:
\be\tag{$\ACA_{0}$}
(\forall f\leq 1)(\exists g\leq 1)(\forall n^{0})[ (\exists m)(f(n,m)=0) \asa g(n)=0  ].
\ee
Our formulation of arithmetical comprehension as in $\ACA_{0}$ makes use of functions, while the version used in RM (see \cite{simpson2}*{II}) makes use of sets. 
These versions are equivalent in light of \cite{simpson2}*{II.3}. 
We single out $\ACA_{0}$ lest anyone believe the inconsistency in Theorem \ref{skimitar} is due to the presence of third-order objects.  
\begin{thm}\label{corfefet}
The system $\M+\ACA_{0}^{\st}$ is inconsistent. 
\end{thm}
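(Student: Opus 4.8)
The plan is to reproduce the structure of the proof of Theorem~\ref{skimitar}---produce a uniform standard bound via $\textsf{R}^{\omega}$ and then diagonalise against it---while staying within the second-order world. First I would simplify $\ACA_{0}^{\st}$: since every binary sequence is standard in $\M$ by item~(b) of the standardness axioms, the quantifiers $(\forall^{\st}f\leq 1)$ and $(\exists^{\st}g\leq 1)$ range over \emph{all} binary sequences, so $\ACA_{0}^{\st}$ becomes the statement that for every $f\leq 1$ there is $g\leq 1$ with $(\forall^{\st}n)[(\exists^{\st}m)(f(n,m)=0)\asa g(n)=0]$. Thus $g$ is a \emph{decidable} proxy for the external predicate $A_{f}(n):\equiv(\exists^{\st}m)(f(n,m)=0)$ on standard rows $n$.

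The key steps would then be as follows. Fix $f\leq 1$ and such $g$. Since $g(n)=_{0}0$ is decidable, I can case-split on it: if $g(n)=0$ then $A_{f}(n)$ holds and yields a standard witness $m$; if $g(n)\neq 0$ then $g(n)=0\di f(n,m)=0$ holds vacuously. This establishes $(\forall^{\st}n)(\exists^{\st}m)[g(n)=0\di f(n,m)=0]$, whence $\textsf{mAC}^{\omega}$ supplies a standard $h^{1}$ with $(\forall^{\st}n)[g(n)=0\di (\exists m\leq h(n))(f(n,m)=0)]$. Coding $\langle g,h\rangle$ as a single object and applying $\textsf{R}^{\omega}$ over all $f\leq 1$ (with the matrix holding vacuously for $f\not\leq 1$) produces a \emph{single} standard bound $H^{1}$, independent of $f$, such that $h\leq^{*}H$ for the $h$ attached to each $f$. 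Combining both directions of the biconditional then gives the uniform statement
\[
(\forall f\leq 1)(\forall^{\st}n)\big[(\exists^{\st}m)(f(n,m)=0)\di (\exists m\leq H(n))(f(n,m)=0)\big].
\]

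Finally I would diagonalise: put $K:=H(0)$, a standard number, and define the (standard, since binary) sequence $f^{*}$ by $f^{*}(0,m):=1$ for $m\leq K$ and $f^{*}(0,m):=0$ for $m>K$. Then $f^{*}(0,K+1)=0$ with $K+1$ standard, so $(\exists^{\st}m)(f^{*}(0,m)=0)$ holds, yet no witness lies below $H(0)=K$, contradicting the displayed statement at $n=0$. I expect the main obstacle to be the intuitionistic step: one cannot case-split on the external, undecidable predicate $A_{f}(n)$ directly, and it is precisely the decider $g$ furnished by $\ACA_{0}^{\st}$ that legitimises the case analysis and hence the application of $\textsf{mAC}^{\omega}$. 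Indeed, without $\ACA_{0}^{\st}$ this premise is unprovable in $\M$---as it must be, since the rest of the argument would otherwise refute the consistency of $\M$. The role of $\textsf{R}^{\omega}$ is equally essential: it breaks the circularity whereby the bound $h$ depends on $f$, delivering the $f$-independent $H$ against which the threshold sequence $f^{*}$ can be built.
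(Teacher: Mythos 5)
Your proof is correct and is essentially the paper's own argument: both exploit item (b) of the standardness axioms to make all $f,g\leq 1$ standard, extract a standard majorant $h$ for the witness function via $\textsf{mAC}^{\omega}$, uniformise it over all $f\leq 1$ with $\textsf{R}^{\omega}$, and then diagonalise with a threshold sequence built from the resulting standard bound. The only cosmetic differences are that the paper invokes $\textsf{IP}^{\omega}_{\tilde{\forall}^{\st}}$ where you case-split on the decidable $g(n)=_{0}0$, applies $\textsf{R}^{\omega}$ to $h$ alone rather than to the coded pair $\langle g,h\rangle$, and diagonalises on every row $n$ rather than just $n=0$.
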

\begin{proof}
Since all binary sequences are standard in $\M$, $\ACA_{0}^{\st}$ implies that for all $f\leq1$, there is standard $g\leq1$ such that
\[
(\forall^{\st} n^{0})[ (\exists^{\st} m)(f(n,m)=0) \di  g(n)=0]  \wedge (\forall^{\st}k)[ g(k)=0 \di (\exists^{\st} l)(f(k,l)=0) ].
\]
The second conjunct yields $(\exists^{\st}h^{1})(\forall^{\st}k)[ g(k)=0 \di (\exists l\leq h(k))(f(k,l)=0)$ due to $\textsf{IP}_{\tilde{\forall}^{\st}}^{\omega}$ and $\textsf{mAC}^{\omega}$.  
Thus,  $(\forall f\leq 1)(\exists^{\st}h)(\exists^{\st}g\leq 1)A(f, g, h)$, where $A(f, g,h )$ is 
\[
(\forall^{\st} n^{0})[ (\exists^{\st} m)(f(n,m)=0) \di  g(n)=0]  \wedge (\forall^{\st}k)[ g(k)=0 \di (\exists l\leq h(k))(f(k,l)=0) ].
\]
Since \emph{realisation} $\textsf{R}^{\omega}$ also applies to external formulas, we obtain
\be\label{opil}
(\exists^{\st} h_{0}^{1})(\forall f\leq 1)(\exists h\leq_{1}^{*}h_{0})(\exists^{\st}g\leq 1)A(f, g, h)
\ee
Now define $f_{0}(n,m)$ as $0$ if $m>h_{0}(n)$, and $1$ otherwise, where $h_{0}$ is as in \eqref{opil}.  
For this $f_{0}$, \eqref{opil} provides $g_{0}$, which satisfies by definition:
\[
(\forall^{\st}n)\big[ (\exists^{\st} m)(f_{0}(n,m)=0) \di ( g_{0}(n)=0) \di  (\exists  m\leq h_{0}(n))(f_{0}(n,m)=0) \big],
\]
which contradicts the definition of $f_{0}$, and we are done.  
\end{proof}
It is tempting, \emph{but incorrect}, to apply the reasoning from the previous proof to 
\be\label{inco}
(\forall f\leq 1)[(\exists^{\st}n)(f(n)=0)\di \underline{(\exists^{\st}m)}(f(m)=0)].  
\ee
Indeed, $\textsf{IP}_{\tilde{\forall}^{\st}}^{\omega}$ does \emph{not} allow pulling the underlined
quantifier in \eqref{inco} to the front.  

\smallskip

Finally, while $\M$ proves the non-classical $\neg (\ACA_{0}^{\st})$, we show in Section \ref{UNT!} that it does prove the classical $\WKL^{\st}$, i.e.\ the latter 
does not lead to inconsistency. 

\subsubsection{Non-classical continuity}
We show that relative to the standard world, extensional functions on the Cantor space are automatically continuous on $C$.  
We also show that they are \emph{nonstandard} continuous as follows:
\be\label{gafot}
(\forall^{\st} f\in C)(\forall g\in C)(f\approx_{1}g \di Y(f)=Y(g)).
\ee
Using $\textup{\textsf{M}}^{\omega}$ and $\textsf{R}^{\omega}$, one readily shows that \eqref{gafot} implies `epsilon-delta' continuity relative to the standard world, and the latter implies \eqref{gafot} using item (b) of the nonstandard axioms of $\M$.
Note that \emph{uniform} nonstandard continuity is \eqref{gafot} with the leading `st' dropped. 
\begin{thm}\label{kafir}
The system $\M$ proves that any $Y^{2}$ satisfying $(\textsf{\textup{E}}_{2})^{\st}$ is also nonstandard \(uniformly\) continuous on the Cantor space. 
\end{thm}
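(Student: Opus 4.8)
The plan is to read the conclusion almost directly off the hypothesis, exploiting the fact -- used repeatedly in Section~\ref{scat} -- that \emph{every binary sequence is standard} in $\M$. First I would unwind both sides. The assertion that $Y^{2}$ is nonstandard (uniformly) continuous on the Cantor space is \eqref{gafot} with the leading `st' removed, namely
\[
(\forall f\in C)(\forall g\in C)\big(f\approx_{1}g\di Y(f)=Y(g)\big).
\]
The hypothesis $(\textsf{\textup{E}}_{2})^{\st}$ for this fixed $Y$, obtained by appending `st' to every quantifier of $(\textsf{\textup{E}}_{2})$ including the unbounded $(\forall z^{0})$ hidden inside `$=_{1}$' (which thereby becomes `$\approx_{1}$'), reads
\[
(\forall^{\st} f^{1})(\forall^{\st} g^{1})\big(f\approx_{1}g\di Y(f)=Y(g)\big).
\]
Thus the only difference between hypothesis and goal is the pair of `st' superscripts on the two leading quantifiers.

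Next I would record the standardness of binary sequences: if $f\leq_{1}1$ then $f\leq_{1}^{*}\hat{1}$, where $\hat{1}$ is the type-$1$ constant-one function, which is standard as a closed term by item~(c) of the standardness axioms; hence item~(b) gives $\st(f)$. With this in hand the proof is immediate: given arbitrary $f,g\in C$, both are standard, so I may instantiate the hypothesis at $f,g$ and conclude $f\approx_{1}g\di Y(f)=Y(g)$. As $f,g$ were arbitrary binary sequences, this is exactly the required uniform nonstandard continuity.

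I do not anticipate a real obstacle; the whole content is the identification of standard extensionality for $Y$ with nonstandard uniform continuity, which collapses the moment one knows that $C$ consists entirely of standard objects. The only point demanding care is the unfolding of $(\textsf{\textup{E}}_{2})^{\st}$ -- in particular checking that relativising the equality `$=_{1}$' yields the approximate-equality relation `$\approx_{1}$' and not genuine equality -- since it is precisely this that makes the two leading `st' quantifiers in the hypothesis range over exactly the binary sequences appearing in the conclusion, so that dropping them costs nothing.
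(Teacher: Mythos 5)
Your proof is correct, and it rests on the same pivotal fact as the paper's argument --- that every $f\leq_{1}1$ satisfies $f\leq_{1}^{*}\lambda n.1$ and is therefore standard by items (b) and (c) of the standardness axioms --- but it takes a more direct route. The paper first uses $\textup{\textsf{M}}^{\omega}$ to convert the hypothesis into the pointwise statement $(\forall f,g\in C)(\exists^{\st}N^{0})(\overline{f}N=\overline{g}N\di Y(f)=Y(g))$ and then applies $\textsf{R}^{\omega}$ to obtain a \emph{standard uniform modulus}, i.e.\ `epsilon-delta' uniform continuity relative to the standard world, from which nonstandard uniform continuity follows. You instead observe that, once every element of $C$ is known to be standard, the two leading $(\forall^{\st})$ quantifiers in $(\textsf{\textup{E}}_{2})^{\st}$ may simply be instantiated at arbitrary $f,g\in C$, yielding $(\forall f,g\in C)(f\approx_{1}g\di Y(f)=Y(g))$ immediately; this is exactly \eqref{gafot} with the leading `st' dropped, hence the stated conclusion, and no appeal to $\textup{\textsf{M}}^{\omega}$ or $\textsf{R}^{\omega}$ is needed. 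What your shortcut gives up is the extra information extracted by the paper's detour (the standard modulus of uniform continuity, which the surrounding discussion of \eqref{gafot} explicitly advertises); what it buys is economy of means, since only the standardness axioms are used. Your care in unfolding $(\textsf{\textup{E}}_{2})^{\st}$ so that the unbounded number quantifier inside `$=_{1}$' is relativised, turning it into `$\approx_{1}$', is exactly the point that makes the instantiation legitimate, and matches the paper's identification of $(\textsf{\textup{E}}_{2})^{\st}$ with \eqref{miethaan}.
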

\begin{proof}
Suppose $Y^{2}$ satisfies $(\textsf{\textup{E}}_{2})^{\st}$, which immediately yields:
\be\label{gafot2}
(\forall f, g\in C)(\exists^{\st}N^{0})(\overline{f}N=\overline{g}N\di Y(f)=Y(g)). 
\ee
using $\textup{\textsf{M}}^{\omega}$ and the fact that all binary sequences are standard in $\M$.  Applying $\textsf{R}^{\omega}$ to \eqref{gafot2} yields that $Y^{2}$ is nonstandard (uniformly) continuous.  
\end{proof}
Note that, by the proof Theorem \ref{imkens}, there are plenty (standard) functionals $Y^{2}$ that are \emph{not} standard extensional as in $(\textsf{\textup{E}}_{2})^{\st}$. 

\smallskip

Theorem \ref{kafir} can be interpreted as saying that $\M$ has intuitionistic features (in that `more' functionals are continuous than in classical mathematics), but the following corollary shows that something `much more non-classical' is going on.  
A functional $Y^{2}$ is \emph{near-standard} if $(\forall^{\st}f^{1})(\exists^{\st}n)(Y(f)=n)$, as defined in \cite{robinson1}*{p.\ 93}. 
\begin{cor}\label{frenemey}
The system $\M$ proves that for any near-standard $Y^{2}$ satisfying $(\textsf{\textup{E}}_{2})^{\st}$, there is standard $Z^{2}$ such that $(\forall f\in C)(Z(f)=_{0}Y(f))$.  
\end{cor}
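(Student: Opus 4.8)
The plan is to extract from the hypotheses a \emph{standard} modulus of uniform continuity for $Y$ on $C$, to define $Z$ by truncating its input to that modulus, and finally to use near-standardness to argue that the resulting $Z$ is standard. First I would invoke Theorem~\ref{kafir}: since $Y$ satisfies $(\textsf{\textup{E}}_{2})^{\st}$, the formula \eqref{gafot2} in its proof gives
\[
(\forall f, g\in C)(\exists^{\st}N^{0})(\overline{f}N=\overline{g}N\di Y(f)=Y(g)).
\]
Applying $\textsf{R}^{\omega}$ to the pair $(f,g)$ (using the usual pairing) yields a standard $k_{0}^{0}$ with $(\forall f,g\in C)(\exists N\leq k_{0})(\overline{f}N=\overline{g}N\di Y(f)=Y(g))$; since agreement of $f,g$ on their first $k_{0}$ bits forces agreement on the first $N\leq k_{0}$ bits, this collapses to the standard modulus
\be\label{modulproposal}
(\forall f, g\in C)(\overline{f}k_{0}=\overline{g}k_{0}\di Y(f)=Y(g)).
\ee

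Next I would define $h(f)$ to be the binary sequence with $h(f)(n)=\min(f(n),1)$ for $n<k_{0}$ and $h(f)(n)=0$ for $n\geq k_{0}$, and set $Z(f):=Y(h(f))$. The truncation is deliberate: it guarantees $h(f)\in C$ for \emph{every} $f^{1}$, not merely for binary inputs. For $f\in C$ one has $\overline{h(f)}k_{0}=\overline{f}k_{0}$, so \eqref{modulproposal} gives $Z(f)=Y(h(f))=Y(f)$, which establishes the desired conclusion $(\forall f\in C)(Z(f)=_{0}Y(f))$.

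The step I expect to be the main obstacle is showing that $Z$ is \emph{standard}, since $Y$ itself is not assumed standard and $Z$ is defined through $Y$. This is exactly where near-standardness should be used. For every $f^{1}$ the sequence $h(f)$ is binary, hence standard by item~(b) of the standardness axioms, so near-standardness of $Y$ gives $(\forall f^{1})(\exists^{\st}n^{0})(Z(f)=n)$. Applying $\textsf{R}^{\omega}$ then produces a standard $z^{0}$ with $(\forall f^{1})(Z(f)\leq z)$. Since $z$ is standard, the constant functional $\lambda f.z$ is standard (it is the value of the closed term $\lambda z.\lambda f.z$ at the standard $z$, by items~(c) and~(d)), and $Z\leq_{2}^{*}\lambda f.z$ holds because $Z(v)\leq z=(\lambda f.z)(u)$ for all relevant $v$. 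Item~(b) then yields that $Z$ is standard, completing the argument. The only delicate points are that $h$ maps \emph{all} inputs into $C$ (so that near-standardness applies uniformly) and that the bound from $\textsf{R}^{\omega}$ is genuinely standard and input-independent, both of which the truncation and the shape of the realization axiom secure.
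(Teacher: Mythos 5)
Your argument is correct and is essentially the paper's proof: both truncate the input of $Y$ to a binary sequence determined by an initial segment, use the extensionality hypothesis to see that this does not change the value on $C$, and then combine near-standardness with $\textsf{R}^{\omega}$ and item~(b) of the standardness axioms to conclude that the truncated functional is majorized by a standard constant and hence standard. The only (cosmetic) difference is that you first extract a \emph{standard} modulus $k_{0}$ of uniform continuity via $\textsf{R}^{\omega}$ and truncate there, whereas the paper truncates at a \emph{nonstandard} $N_{0}$ and appeals to $(\textsf{E}_{2})^{\st}$ directly via $\overline{f}N_{0}*00\dots\approx_{1}f$.
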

\begin{proof}
First of all, since all binary sequences are standard, $(\forall f\in C)(\exists^{\st}n)(Y(f)=n)$ follows from the near-standardness of $Y$, and applying $\textsf{R}^{\omega}$ yields a standard upper bound $n_{0}$ for $Y^{2}$ on the Cantor space.  
Fix nonstandard $N_{0}$ and define $Z(f)$ as $Y(\overline{f}N_{0}*00\dots )$ if $\overline{f}N_{0}$ is a binary sequence, and $n_{0}$ otherwise.  Then $Z(f)=Y(f)$ for $f\in C$ by standard extensionality, and $Z\leq_{2}^{*}n_{0}$ implies that $Z$ is standard.    
\end{proof}
By the theorem, standard extensionality implies continuity relative to the standard world.  Now, as discussed in \cites{brie, SB}, one can naturally interpret the standardness predicate `$\st(x)$' as `$x$ is computationally relevant' using the  systems from \cite{brie}.  
With this interpretation in mind, Corollary \ref{frenemey} expresses that relative to `st', continuity implies being computable (in some sense).  
However, intuitionistic mathematics, the continuity axiom $\WCN$ in particular, refutes \emph{Church's thesis} \textsf{CT}, where the latter expresses that all sequences are computable (in the sense of Turing), and the former
implies Brouwer's continuity theorem (see \cite{troeleke1}*{p.\ 211}).  

\smallskip

We can even prove a stronger consequence of Theorem \ref{kafir}, as follows. 
\begin{cor}\label{frenemey2}
The system $\M+(\exists^{2})$ proves that for any near-standard $Y^{2}$ and standard $g^{1}$, there is standard $Z^{2}$ such that $(\forall f\leq_{1} g)(Z(f)=Y(f))$.  
\end{cor}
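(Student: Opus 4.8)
The plan is to standardize $Y$ directly on the ``cube'' $\{f : f\leq_{1}g\}$ by an explicit definition by cases, using $(\exists^{2})$ to legitimize the defining case-split and near-standardness to bound the range. The argument runs parallel to Corollary~\ref{frenemey}, but the truncation-plus-extensionality trick there is replaced by a genuine definition by cases; supplying that case-split is exactly what the extra axiom $(\exists^{2})$ buys us, which is why it appears in the hypothesis.

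First I would note that every $f\leq_{1}g$ is standard whenever $g$ is. The monotone majorant $\hat{g}(n):=\max_{i\leq n}g(i)$ is standard, being obtained from the standard $g$ by a closed term (items (c) and (d) of the standardness axioms), and $f\leq_{1}g$ gives $f\leq^{*}_{1}\hat{g}$, so item~(b) makes $f$ standard. Hence the near-standardness of $Y$ upgrades from standard inputs to the whole cube: $(\forall f\leq_{1}g)(\exists^{\st}n)(Y(f)=n)$. Feeding $\Phi(f,n):\equiv[f\leq_{1}g\di Y(f)=n]$ into Realization $\textsf{R}^{\omega}$ then yields a standard number $n_{0}$ with $(\forall f\leq_{1}g)(Y(f)\leq n_{0})$, i.e.\ a standard bound on $Y$ over the cube.

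Next I would use $(\exists^{2})$ to decide the $\Pi_{1}^{0}$ predicate ``$f\leq_{1}g$'': letting $c_{f}(n)=0$ exactly when $f(n)>g(n)$, this predicate equals $\varphi(c_{f})\ne_{0}0$ for the functional $\varphi$ from $(\exists^{2})$, so it has a characteristic function and definition by cases on it is available in the system. I then set
\[
Z(f):=\begin{cases} Y(f) & \text{if } f\leq_{1}g, \\ 0 & \text{otherwise,}\end{cases}
\]
which satisfies $Z(f)=Y(f)$ for all $f\leq_{1}g$ by construction. Finally $Z(f)\leq n_{0}$ holds for every $f$ (by the bound in the first branch and trivially in the second), so $Z\leq^{*}_{2}(\lambda f.n_{0})$; since $(\lambda f.n_{0})$ is standard as a closed term in the standard $n_{0}$, item~(b) forces $Z$ to be standard, completing the proof.

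The main obstacle is precisely the legitimacy of this definition of $Z$, and it is where $(\exists^{2})$ is essential. In Corollary~\ref{frenemey} the domain $C$ was handled by truncating $f$ at a nonstandard $N_{0}$ (a bounded, hence decidable, test) together with standard extensionality of $Y$; here there is no extensionality hypothesis on $Y$ and the condition $f\leq_{1}g$ is genuinely $\Pi_{1}^{0}$, so the Turing jump functional is needed to realize the split. Once $Z$ is recognized as a bona fide definable functional bounded by the standard $n_{0}$, its standardness is immediate from item~(b), so the delicate point is entirely the \emph{definability} of $Z$ rather than any property of $Y$ itself.
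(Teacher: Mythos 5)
Your proposal is correct and follows essentially the same route as the paper: use $(\exists^{2})$ to legitimize the case-definition $Z(f):=Y(f)$ if $f\leq_{1}g$ and $0$ otherwise, observe that every $f\leq_{1}g$ is standard because $g$ is, and then obtain a standard majorant for $Z$ from near-standardness and $\textsf{R}^{\omega}$ so that item (b) yields $\st(Z)$. The paper's proof is just a terser version of this (deferring the bounding argument to the proof of Corollary \ref{frenemey}), so there is nothing to add.
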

\begin{proof}
Use $(\exists^{2})$ to define $Z(f)$ as $Y(f )$ if $f\leq_{1} g$ and $0$ otherwise.  Then $Z$ is standard in the same way as in the corollary: since $g$ is standard, $f\leq_{1}g$ is too.
\end{proof}
By the previous, any functional $Y^{2}$ is automatically standard if it is near-standard on $C$, and zero elsewhere.  

\subsection{Non-intuitionistic aspects of the Dinis-Gaspar system}\label{scatto}
We show that the Dinis-Gaspar system does not qualify as a system of intuitionistic mathematics for the following reasons:
\begin{enumerate}
\item[(i)] The system $\M$ proves, relative to the standard world, the \emph{weak K\"onig's lemma}, which is rejected in constructive mathematics (Section \ref{UNT!}). 
\item[(ii)] The system $\M$ is inconsistent with the axiom, relative to the standard world, \emph{all functions are \(epsilon-delta\) continuous on the Baire space} (Section~\ref{hongry}).
\item[(iii)] The system $\M$ is inconsistent with the axiom schema, relative to the standard world, called \emph{Kripke's scheme} (Section \ref{hongry2}).
\end{enumerate}
Regarding the occurrence of `relative to the standard world' in the previous items, we recall the following regarding the standard objects in internal set theory.  
\begin{quote}
For example, the set $\N$ of all natural numbers, the set $\R$ of all real numbers, the real number $\pi$, and the Hilbert space $L^{2}(\R)$ are all standard sets, since they may be uniquely described in conventional mathematical terms.
\emph{Every specific object of conventional mathematics is a standard set}. It remains unchanged in the new theory. (\cite{wownelly}*{p.\ 1166}, emphasis in original)
\end{quote}
We note that all closed terms of $\M$ are standard, and presumably every object which may be constructed (in some sense or other from constructive mathematics) will be standard.  
Moreover, even in the classical system from \cite{brie}, the \emph{standard} objects yield (copious) computational/constructive content, as detailed in \cite{SB}.  
Thus, the standard world should be the focus of our attention, if we are interested in computational/constructive content.  
\subsubsection{Weak K\"onig's lemma}\label{UNT!}
We show that the Dinis-Gaspar system proves, relative to the standard world, \emph{weak K\"onig's lemma} and the latter's uniform version.
Recall that the variable `$T$' is reserved for trees, and we denote by `$T\leq_{1}1$' that $T$ is a binary tree.  Then $\WKL$ is just the classical contraposition of $\FAN$, and   
\be\tag{$\UWKL$}
(\exists \Psi) (\forall T^{1}\leq 1)\big[   (\forall n^{0})(\exists \beta\leq 1)(\overline{\beta}n\in T)  \di(\forall m^{0})(\overline{\Psi(T)}m\in T) \big]
\ee
the uniform version. 
As $\WKL$ is (constructively) equivalent to a fragment of the law of excluded middle (see \cite{ishi1}), it is rejected in constructive mathematics.  
\begin{thm}
The system $\M$ proves $\WKL^{\st}$ and $\UWKL^{\st}$.  
\end{thm}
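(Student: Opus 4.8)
The plan is to derive $\UWKL^{\st}$ by exhibiting an \emph{explicit} path functional, from which $\WKL^{\st}$ drops out immediately; I would warm up with a direct nonstandard proof of $\WKL^{\st}$. The enabling observation throughout is an analogue of ``all binary sequences are standard'': any functional $\Psi^{1\di 1}$ whose output is always binary, i.e.\ $(\forall T)(\Psi(T)\leq_{1}1)$, is itself standard. Indeed, letting $\mathbf{1}$ be the closed term sending every $T$ to the constant-one sequence, one checks $\Psi\leq^{*}_{1\di 1}\mathbf{1}$ straight from the definition of strong majorizability, so item (b) of the standardness axioms makes $\Psi$ standard. This is crucial: it means I need not \emph{extract} the path functional from a choice principle (which, because of the majorizability-weakened conclusions of $\textsf{mAC}^{\omega}$ and $\textsf{R}^{\omega}$, would yield only a majorant of the path rather than the path itself), but may instead freely \emph{define} an internal $\Psi$, even one using nonstandard parameters, and conclude standardness for free.

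For $\WKL^{\st}$: since binary trees, binary strings and binary sequences are all standard, the leading ``st'' on $T$, $\beta$ and $\alpha$ may be dropped, and the statement reduces to: for $T\leq_{1}1$ with $(\forall^{\st}n)(\exists\sigma\in T)(|\sigma|=n)$ there is a binary $\alpha$ with $(\forall^{\st}m)(\overline{\alpha}m\in T)$. I would apply $\textsf{I}^{\omega}$ to $\phi(\sigma,n):\equiv(\sigma\in T\wedge|\sigma|\geq n)$: the premise $(\tilde{\forall}^{\st}z^{0})(\exists\sigma)(\forall n\leq z)\phi(\sigma,n)$ is just the hypothesis that $T$ has a node of each standard length, so $\textsf{I}^{\omega}$ delivers a binary $\sigma_{0}\in T$ of nonstandard length. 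Then $\alpha:=\sigma_{0}*00\dots$ is binary, hence standard, and for standard $m$ we have $\overline{\alpha}m=\overline{\sigma_{0}}m\in T$ since $\overline{\sigma_{0}}m$ is an initial segment of $\sigma_{0}\in T$ and $T$ is a tree.

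For $\UWKL^{\st}$ I would fix once and for all a nonstandard $N_{0}$ and define the internal functional $\Psi$ by first computing, via bounded search, $M_{T}:=\max\{k\leq N_{0}:(\exists\sigma\leq 1)(|\sigma|=k\wedge\sigma\in T)\}$, and then running the ``leftmost branch'' construction up to level $M_{T}$: set $\tau_{0}=\langle\rangle$ and, maintaining the invariant that $\tau_{i}$ has a descendant in $T$ of length $M_{T}$, let $\tau_{i+1}=\tau_{i}*0$ if $\tau_{i}*0$ still has such a descendant and $\tau_{i+1}=\tau_{i}*1$ otherwise; finally put $\Psi(T):=\tau_{M_{T}}*00\dots$. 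The output is binary, so $\Psi$ is standard by the opening observation. For a standard $T$ meeting the premise, $M_{T}$ is \emph{nonstandard}: were it standard, the premise would supply a node of length $M_{T}+1\leq N_{0}$, contradicting maximality. The invariant then forces $\tau_{i}\in T$ for all $i\leq M_{T}$, in particular for every standard $m$, so $(\forall^{\st}m)(\overline{\Psi(T)}m\in T)$, which is $\UWKL^{\st}$ (and re-derives $\WKL^{\st}$ with witness $\Psi(T)$).

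The main obstacle, and the reason the naive approach fails, is exactly the step hidden in $M_{T}$: one is tempted to use a single fixed nonstandard cut-off $N$ for all trees, as in the proof of the $\HBU_{C}$ theorem, but without \emph{Transfer} (which $\M$ lacks, and which is in any case inconsistent with $\M$ in the relevant form) a standard tree with nodes at every standard level need not possess a node at a prescribed nonstandard level $N$. Letting the cut-off $M_{T}$ depend on $T$ through bounded search repairs this: $M_{T}$ is automatically nonstandard precisely because $T$ reaches every standard level, and it is a genuine level of $T$ by construction. I would then only double-check that the leftmost-branch recursion and the bounded maximum live in the internal term calculus (they do, being primitive recursive in $N_{0}$ and $T$), and that dropping ``st'' from the binary quantifiers in $\WKL^{\st}$ and $\UWKL^{\st}$ is licensed by item (b).
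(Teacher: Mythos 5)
Your proposal is correct and follows essentially the same route as the paper: $\WKL^{\st}$ via Idealisation $\textsf{I}^{\omega}$ applied to the internal statement that $T$ has nodes of every given length, and $\UWKL^{\st}$ via an explicit leftmost-branch functional cut off at a nonstandard level, declared standard because it is majorized by the constant-one functional and item (b) of the standardness axioms applies. Your extra care about taking the \emph{maximal} length $M_{T}\leq N_{0}$ of a node of $T$ (rather than assuming a node at level $N_{0}$ exists) is exactly what the paper's phrase ``of maximal length $|\sigma|\leq N$'' accomplishes, so the two arguments coincide.
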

\begin{proof}
Let $f_{0}^{1}$ be the sequence that is constant $0$.  
Let $T$ be a standard binary tree such that $(\forall^{\st}n^{0})(\exists \sigma^{1}\leq 1)(\overline{\sigma}n\in T)$, i.e.\ $T$ is infinite relative to the standard world.  
We immediately obtain:
\[
(\forall^{\st}n^{0})(\exists \sigma^{1}\leq 1)(\forall m\leq n)(\overline{\sigma}m\in T),
\]
and applying $\textsf{I}^{\omega}$ (since `$\leq_{0}^{*}$' is `$\leq_{0}$' by definition) yields $(\exists \sigma^{1}\leq 1)(\forall^{\st}n)(\overline{\sigma}n\in T)$. 
Since $\sigma \leq_{1}^{*}1$, item (b) of the nonstandard axioms implies that $\sigma$ is a standard binary sequence, and $\WKL^{\st}$ follows.  To obtain $\UWKL^{\st}$, fix nonstandard $N$ and define $\Phi^{1\di 1}$ as follows: 
$\Phi(T)$ is $\sigma*f_{0}$ where $\sigma\in T$ is the left-most binary sequence of maximal length $|\sigma|\leq N $, if it exists, and $f_{0}$ otherwise.  Since $\Phi\leq_{1\di 1}^{*}1$, this defines a standard functional, and we are done.   
\end{proof}
Kohlenbach shows in \cite{kooltje} that $\RCAo\vdash \UWKL\asa (\exists^{2})$ \emph{crucially} depends on the axiom of extensionality.  
Assuming $\M$ is consistent, we do not have access to $(\textsf{E}_{2})^{\st}$ by the proof of Theorem \ref{skimitar}, 
and hence $(\exists^{2})^{\st}$ does not follow from $\UWKL^{\st}$ in $\M$, i.e.\ the previous theorem does not lead to a contradiction. 
Moreover, the first part of the theorem, involving a classical system, has been proved in \cite{dife}, and the proof in the latter seems to go through in our (semi-)intuitionistic setting. 
 
\smallskip

Moreover, $\WKL$ is (constructively) equivalent to $(\forall x\in \R)(x\geq 0 \vee x\leq 0)$ and to the fact that every real in $[0,1]$ has a binary representation (see \cite{ishi1}).  
As expected, $\M$ also proves versions of the latter, relative to the standard world.   
\begin{thm}\label{dorkiiii}
The system $\M$ proves that every real in the unit interval has a standard binary approximation, i.e.\ $(\forall x\in [0,1])(\exists^{\st}f\in C)\big(x\approx \sum_{n=0}^{\infty} \frac{f(n)}{2^{n}}\big)$, and  
\be\label{simpler}
(\exists^{\st} \Phi^{2})(\forall x\in \R)(\Phi(x)=0\di x\lessapprox 0 \wedge \Phi(x)=1 \di x \gtrapprox 0).
\ee
\end{thm}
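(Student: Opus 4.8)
The plan is to prove both statements by the same device used in the proofs of $\UWKL^{\st}$ and $\HBU_{C}$ earlier in this section: fix a nonstandard number $N$, define the required object by truncating (or rounding) every real at precision $N$, and then derive standardness from item (b) of the standardness axioms together with item (c). The verification of the approximation properties will then be routine, since fast convergence gives error bounds of order $\frac{1}{2^{N}}$, which is infinitesimal for nonstandard $N$, so that $\frac{1}{2^{N}}<\frac{1}{2^{k}}$ for every standard $k$.

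For \eqref{simpler} I would fix a nonstandard $N$ and define the (internal) functional $\Phi^{2}$ by $\Phi(x):=0$ if $[x](N)\leq_{\Q}0$ and $\Phi(x):=1$ otherwise. Since $\Phi(x)\in\{0,1\}$ always, we have $\Phi\leq_{2}^{*}1$, and as the constant-one functional of type two is a closed term, hence standard by item (c), item (b) shows that $\Phi$ is standard, exactly as $Y_{0}$ is in the proof of Theorem~\ref{imkens}. To check the implications, recall that fast convergence yields $|x-[x](N)|\leq\frac{1}{2^{N}}$. Hence if $\Phi(x)=0$, then $x\leq [x](N)+\frac{1}{2^{N}}\leq\frac{1}{2^{N}}$, which lies below $\frac{1}{2^{k}}$ for every standard $k$, i.e.\ $x\lessapprox 0$; symmetrically, $\Phi(x)=1$ gives $x\geq [x](N)-\frac{1}{2^{N}}\geq-\frac{1}{2^{N}}$, i.e.\ $x\gtrapprox 0$. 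As these are the only two cases, \eqref{simpler} follows; note that no extensionality of $\Phi$ is required, since the implications hold for whichever representation of $x$ is used.

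For the first statement I would again fix nonstandard $N$, let $\sigma$ be the (definable, finite) binary expansion of the rational $[x](N)$ to $N$ bits, and set $f:=\sigma*00\dots$, exactly as $\Phi(T)$ is built from a truncated string in the proof of $\UWKL^{\st}$. Since $f\leq_{1}1$ we have $f\leq_{1}^{*}1$, so $f$ is a standard binary sequence by item (b). Writing $d=\sum_{n}f(n)/2^{n}$ for the dyadic rational coded by $f$, the truncation is chosen so that $|d-[x](N)|\leq\frac{1}{2^{N}}$; combining with $|[x](N)-x|\leq\frac{1}{2^{N}}$ gives $|d-x|\leq\frac{1}{2^{N-1}}$, which is infinitesimal, whence $x\approx d$, as required.

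The hard part is not the approximation arithmetic, which is immediate once $N$ is nonstandard, but rather confirming that the objects defined through the nonstandard parameter $N$ are genuinely standard. This rests entirely on majorizability by the standard constant-one mappings together with item (b), the same mechanism behind every standardness claim in this section; one must in particular verify $\Phi\leq_{2}^{*}1$ directly from the type-two clause of strong majorizability, and check that the truncation producing $\sigma$ is given by a term of $\M$, so that nothing external beyond the value of $N$ enters the definition.
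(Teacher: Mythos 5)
Your proof of \eqref{simpler} is essentially the paper's: fix nonstandard $N$, define $\Phi$ by a threshold test on the rational approximation $[x](N)$, and get standardness from $\Phi\leq_{2}^{*}1$ via item (b) of the standardness axioms (the paper uses the threshold $\frac{1}{N}$ rather than $0$, which is immaterial since both implications in \eqref{simpler} are one-directional). For the first claim you diverge: the paper bootstraps the binary approximation \emph{from} $\Phi$ by iterated bisection --- $\Phi(x_{0}-\frac12)$ gives the first bit, and $\Phi\bigl(x_{0}-(\frac{1}{2^{n+1}}+\sum_{i=0}^{n-1}\frac{b_{i}}{2^{i+1}})\bigr)$ gives the $(n{+}1)$-th --- with the resulting bit sequence standard because it is majorized by the constant-one function; you instead extract the $N$-bit binary expansion of the rational $[x](N)$ directly and pad with zeros. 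Both routes are valid and rest on the same standardness mechanism; yours is more direct and avoids the recursion, at the cost of having to note that $[x](N)$ need not lie in $[0,1]$ (it can undershoot $0$ or overshoot $1$ by up to $2^{-N}$), so the ``binary expansion of $[x](N)$'' needs a clipping case distinction of the kind the paper makes explicit later (the condition $-\frac{1}{N}\leq_{\Q}[x](2^{N})\leq_{\Q}1+\frac{1}{N}$ in the standard-part-map theorem), whereas the paper's iterated-$\Phi$ construction sidesteps this automatically. This is a minor bookkeeping point, not a gap.
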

\begin{proof}
Fix nonstandard $N$ and define $\Phi^{2}$ as:  $\Phi(x)=0$ if $[x](N)\leq \frac{1}{N}$, and $1$ otherwise.  
Note that $\Phi\leq_{2}^{*}1$ implies this functional is standard.  Then $\Phi(x_{0}-1/2)$ provides the first bit of a binary approximation of $x_{0}$, and given the first $n$ such bits $b_{0}, \dots, b_{n-1}$, then $\Phi(x_{0}-(\frac{1}{2^{n+1}}+\sum_{i=0}^{n-1}\frac{b_{i}}{2^{i+1}}))$ yields the $n+1$-th bit.  
\end{proof}
There are a number of other theorems (constructively) equivalent to $\WKL$ by \cite{ishi1}, like e.g.\ the intermediate value theorem.  
As expected, one can also establish these theorems relative to `st' inside $\M$, but we do not go into details. 

\smallskip

It is well-known that $\WKL$ is inconsistent with the aforementioned axiom \emph{Church's thesis} $\textsf{CT}$ (\cite{beeson1}*{p.\ 68}).  
Since $\M$ proves $\WKL^{\st}$, one expects $\M$ to be inconsistent with $\textsf{CT}$ relative to the standard world.  Let `$\varphi_{e,s}(n)=m$' be the (primitive recursive) predicate expressing that the 
Turing machine with index $e$ and input $n$ halts after at most $s$ steps with output $m$.  Then \emph{Church's thesis} is defined as follows.    
\be\tag{$\textsf{\textup{CT}}$}
(\forall f^{1})(\exists e^{0})(\forall n^{0},m^{0})\big[ (\exists s^{0}) (\varphi_{e, s}(n)=m)\asa f(n)=m  \big].
\ee
\begin{thm}
The system $\M$ proves $\neg \textsf{\textup{CT}}^{\st}$.
\end{thm}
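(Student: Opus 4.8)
The plan is to relativise to the standard world the classical fact that $\WKL$ refutes $\textsf{\textup{CT}}$, exploiting that $\M\vdash\WKL^{\st}$ as just established. I would argue inside $\M$ under the assumption $\textsf{\textup{CT}}^{\st}$ and derive a contradiction, which yields $\M\vdash\neg\textsf{\textup{CT}}^{\st}$. To this end, fix the two standard recursively enumerable sets $A:=\{e: (\exists s)(\varphi_{e,s}(e)=0)\}$ and $B:=\{e:(\exists s)(\varphi_{e,s}(e)=1)\}$, which are genuinely disjoint since a halting computation has a unique output, and let $A_{\ell}:=\{e<\ell:(\exists s<\ell)(\varphi_{e,s}(e)=0)\}$ and $B_{\ell}$ be defined analogously. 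I would then consider the binary tree $T$ consisting of those $\tau\leq_{1}1$ with $(\forall e<|\tau|)[(e\in A_{|\tau|}\di \tau(e)=1)\wedge(e\in B_{|\tau|}\di\tau(e)=0)]$. Since the characteristic function of $T$ is given by a closed term, $T$ is standard by item (c) of the standardness axioms, and one checks that $T$ is infinite relative to the standard world, i.e.\ $(\forall^{\st}n)(\exists\sigma\leq_{1}1)(\overline{\sigma}n\in T)$: for (standard) $n$ the disjoint finite sets $A_{n},B_{n}$ determine a length-$n$ string lying in $T$, and this existence is internal.

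Next I would apply $\WKL^{\st}$ to $T$ to obtain a standard path, i.e.\ a standard $\sigma\leq_{1}1$ with $(\forall^{\st}n)(\overline{\sigma}n\in T)$. By construction this $\sigma$ separates $A$ from $B$ on standard inputs: whenever $e$ is standard and its membership in $A$ (resp.\ $B$) is witnessed at a standard stage, reading off the tree constraint at a standard length exceeding that stage gives $\sigma(e)=1$ (resp.\ $\sigma(e)=0$). I would then invoke $\textsf{\textup{CT}}^{\st}$ on the standard total $\sigma$ to produce a standard index $c$ with $(\forall^{\st}e,m)[\sigma(e)=m\asa (\exists^{\st}s)\varphi_{c,s}(e)=m]$; since $\sigma$ is total this says precisely that every relevant computation halts at a \emph{standard} stage. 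Using the (standard, primitive recursive) recursion theorem I would build a standard index $d$ for the machine that on any input runs $c$ on $d$ and returns $\sigma(d)$, so that $\varphi_{d}(d)$ halts at a standard stage with value $\sigma(d)$.

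The contradiction then comes from the usual diagonal case split on the standard bit $\sigma(d)\in\{0,1\}$, which is legitimate since equality of standard naturals is decidable: if $\sigma(d)=1$ then $\varphi_{d}(d)$ halts with value $1$ at a standard stage, so $d\in B$ and hence $\sigma(d)=0$ by the separation property; if $\sigma(d)=0$ then $\varphi_{d}(d)$ halts with value $0$, so $d\in A$ and hence $\sigma(d)=1$. Both cases are absurd, so $\M\vdash\neg\textsf{\textup{CT}}^{\st}$. I expect the main obstacle to be the bookkeeping of the relativisation rather than any new idea: one must ensure that the index $c$, the diagonal index $d$, and all halting times remain standard (which is exactly what $\textsf{\textup{CT}}^{\st}$ and items (b)--(c) of the standardness axioms deliver), and that the separation and diagonal arguments involve only standard quantifiers, so that the classical recursion-theoretic reasoning survives in the intuitionistic setting of $\M$. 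The only non-internal inputs are $\WKL^{\st}$ together with the standardness of the closed-term objects $T$ and $d$; everything else (disjointness of $A,B$, infiniteness of $T$, and the halting facts) is provable about the actual Turing machines in the internal base theory.
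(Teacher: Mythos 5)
Your proof is correct, but it takes a genuinely different route from the paper's. You carry out the implication that the paper only gestures at as motivation (``since $\M$ proves $\WKL^{\st}$, one expects $\M$ to be inconsistent with $\textsf{\textup{CT}}$ relative to the standard world''): you relativise the classical refutation of $\textsf{\textup{CT}}$ from $\WKL$ via the separating tree for the recursively inseparable sets $A,B$, extract a standard separating path from $\WKL^{\st}$, and then diagonalise against its standard index using the recursion theorem, tracking throughout that the indices $c,d$ and all halting times remain standard. The paper instead gives a short, self-contained diagonal argument that bypasses $\WKL^{\st}$ entirely: fixing a nonstandard $N$, it defines $f_{0}(e)=1$ iff $(\exists s\leq N)(\varphi_{e,s}(e)=0)$ and $0$ otherwise, notes that $f_{0}\leq_{1}1$ is standard by item (b) of the standardness axioms, applies $\textsf{\textup{CT}}^{\st}$ to obtain a standard index $e_{0}$ for $f_{0}$, and reaches a contradiction in both cases for $f_{0}(e_{0})$ (a standard halting stage is automatically $\leq N$, and determinism of the machine rules out two distinct outputs). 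The paper's route is shorter and makes the true source of the inconsistency explicit, namely that axiom (b) turns the nonstandard-stage halting function into a \emph{standard} object; your route is heavier on recursion-theoretic bookkeeping (inseparable r.e.\ sets, the recursion theorem, simulation overheads), but it has the merit of showing that the expected classical argument survives the relativisation to `$\st$' intact and of reusing the already-established $\WKL^{\st}$ rather than rebuilding a diagonal from scratch. Both proofs ultimately lean on the same pathological feature of $\M$: yours does so indirectly, through the standardness of the path supplied by $\WKL^{\st}$.
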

\begin{proof}
Suppose $\textsf{CT}^{\st}$ holds.  
Fix nonstandard $N^{0}$ and define (standard by definition) $f_{0}\leq1$ as follows: $f_{0}(e)=1$ if $(\exists s\leq N)(\varphi_{e,s}(e)=0)$, and $0$ otherwise. 
Then there is standard $e_{0}^{0}$ such that $(\exists^{\st}  s^{0})(  \varphi_{e_{0}, s}(e_{0})=m)\asa f_{0}(e_{0})=m  $ for any standard $m$.  
However, $f_{0}(e_{0})=1$ implies by definition $(\exists s^{0}\leq N)(\varphi_{e_{0},s}(e_{0})=0)$, a contradiction. 
Similarly, $f_{0}(e_{0})=0$ implies by definition $(\forall  s^{0}\leq N)(\forall n^{0})(\varphi_{e_{0},s}(e_{0})=n\di n\ne0)$, a contradiction. 
Since we obtained a contradiction in each case, $\textsf{CT}^{\st}$ is false. 
\end{proof}

\subsubsection{Intuitionistic continuity}\label{hongry}
We show that $\M$ is inconsistent with certain axioms, relativised to the standard world, of intuitionistic mathematics.  

\smallskip

First of all, we consider the continuity principle $\BCT_{C}\equiv(\forall Y^{2})\cont_{C}(Y)$, which expresses that all functionals are (epsilon-delta) continuous on the Cantor space, as given by the following formula:
\be\tag{$\cont_{C}(Y)$}
(\forall   f \leq 1)(\exists N^{0})(\forall g\leq 1)(\overline{f}N=\overline{g}N\di Y(f)=Y(g)).
\ee
Secondly, we consider the principle \emph{weak continuity for numbers}
\be\tag{\textsf{\textup{WC-N}}}
(\forall \alpha^{1})(\exists n^{0})A(\alpha, n)\di (\forall \alpha^{1})(\exists n^{0}, m^{0})(\forall \beta^{1})[ \overline{\alpha}n=\overline{\beta}m\di  A(\alpha, m)]
\ee
for any formula $A$ in the language of finite types.  Let $\WCN_{0}$ be the restriction of $\WCN$ to quantifier-free formulas, and recall the axiom $\SE$ from Section \ref{firstlasteternity}.  
\begin{thm}\label{ravudavu}
The systems $\M+(\BCT_{C})^{\st}$, $\M+(\WCN_{0})^{\st}$, and $\M+\SE^{\st}$ are inconsistent. 
\end{thm}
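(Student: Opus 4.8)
The plan is to recycle the standard counterexample from the proof of Theorem~\ref{imkens}: the standard functional $Y_{0}^{2}$ defined in \eqref{toxxx} together with the standard binary sequences $f_{0}:=11\dots$ and $g_{0}:=\overline{f_{0}}N*00\dots$, where $N$ is nonstandard. Recall that $Y_{0},f_{0},g_{0}$ are all standard by item (b) of the standardness axioms, that $f_{0}\approx_{1}g_{0}$, i.e.\ $(\forall^{\st}n^{0})(f_{0}(n)=g_{0}(n))$, and yet $Y_{0}(f_{0})=1\ne 0=Y_{0}(g_{0})$. The common strategy is to show that each of the three principles, relativised to the standard world, forces $Y_{0}(f_{0})=Y_{0}(g_{0})$ (or equivalently refutes $f_{0}\approx_{1}g_{0}$), which is the required contradiction; in this sense all three inconsistencies are instances of the single phenomenon behind Theorem~\ref{imkens} and Corollary~\ref{skimitar2}.

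For $\M+(\BCT_{C})^{\st}$, I would unfold $(\BCT_{C})^{\st}$ as $(\forall^{\st}Y^{2})(\forall^{\st}f\leq 1)(\exists^{\st}N^{0})(\forall^{\st}g\leq 1)(\overline{f}N=\overline{g}N\di Y(f)=Y(g))$ and instantiate $Y:=Y_{0}$ and $f:=f_{0}$, both standard. This yields a standard modulus $N_{1}$; since $N_{1}$ is standard and $N$ nonstandard we have $N_{1}<N$, so $f_{0}$ and $g_{0}$ agree on their first $N_{1}$ entries, i.e.\ $\overline{f_{0}}N_{1}=\overline{g_{0}}N_{1}$. Instantiating the innermost (standard) quantifier with $g:=g_{0}$ then gives $Y_{0}(f_{0})=Y_{0}(g_{0})$, contradicting \eqref{toxxx}. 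For $\M+(\WCN_{0})^{\st}$, I would apply $(\WCN_{0})^{\st}$ to the quantifier-free instance $A(\alpha,n)\equiv[Y_{0}(\alpha)=_{0}n]$. The premise $(\forall^{\st}\alpha^{1})(\exists^{\st}n^{0})A(\alpha,n)$ holds trivially, since $Y_{0}$ is standard and hence $Y_{0}(\alpha)$ is standard for standard $\alpha$ by item (d) of the standardness axioms. The conclusion, instantiated at $\alpha:=f_{0}$, produces a standard modulus $m_{1}$ and standard value $n_{1}=Y_{0}(f_{0})$ such that every standard $\beta$ agreeing with $f_{0}$ on its first $m_{1}$ entries satisfies $Y_{0}(\beta)=n_{1}$; feeding in the standard sequence $g_{0}$, which agrees with $f_{0}$ on any standard initial segment, yields $Y_{0}(g_{0})=n_{1}=Y_{0}(f_{0})$, again contradicting \eqref{toxxx}.

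For $\M+\SE^{\st}$, I would simply instantiate $\SE^{\st}$, namely $(\forall^{\st}Y^{2},f^{1},g^{1})[Y(f)\ne_{0}Y(g)\di(\exists^{\st}n^{0})(f(n)\ne_{0}g(n))]$, at the standard triple $Y_{0},f_{0},g_{0}$. The antecedent $Y_{0}(f_{0})\ne_{0}Y_{0}(g_{0})$ holds since $1\ne 0$, so strong extensionality relative to the standard world delivers $(\exists^{\st}n^{0})(f_{0}(n)\ne_{0}g_{0}(n))$, in direct contradiction with $f_{0}\approx_{1}g_{0}\equiv(\forall^{\st}n^{0})(f_{0}(n)=g_{0}(n))$.

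The main obstacle is the careful bookkeeping of the standardness predicate through the relativisations. One must verify that each premise survives once all quantifiers are decorated with `st'; in particular that $Y_{0}(\alpha)$ is standard for standard $\alpha$, so that the hypothesis of $(\WCN_{0})^{\st}$ is actually available, and that the witness $A$ chosen for $\WCN_{0}$ is genuinely quantifier-free (so that the restricted schema applies). One must also confirm that $Y_{0},f_{0},g_{0}$ are standard, so that they may legitimately instantiate the standard quantifiers occurring in each principle, and that a standard modulus indeed fails below the nonstandard $N$ separating $f_{0}$ from $g_{0}$. Once this is in place, the three statements reduce uniformly to the impossibility of $Y_{0}$ being continuous, extensional, or strongly extensional at the pair $f_{0}\approx_{1}g_{0}$.
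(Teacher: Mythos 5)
Your proposal is correct and follows essentially the same route as the paper: all three inconsistencies are derived from the single counterexample $Y_{0}, f_{0}, g_{0}$ of Theorem \ref{imkens}, with $(\BCT_{C})^{\st}$ and $(\WCN_{0})^{\st}$ (via the instance $A(\alpha,n)\equiv Y_{0}(\alpha)=n$) yielding standard-world continuity of $Y_{0}$ and $\SE^{\st}$ yielding standard extensionality, each contradicted by $f_{0}\approx_{1}g_{0}$ with $Y_{0}(f_{0})\ne Y_{0}(g_{0})$. Your write-up merely supplies more bookkeeping detail (and in passing fixes the paper's typo about which of $Y_{0}(f_{0}), Y_{0}(g_{0})$ equals $0$), but the argument is the same.
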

\begin{proof}
For the first part, consider $Y_{0}, f_{0}, g_{0}$ as in the proof of Theorem \ref{imkens} and note that $f_{0}\approx g_{0}$ contradicts $(\BCT_{C})^{\st}$.
For the second part, take $A(\alpha, n)\equiv( Y_{0}=n)$ and note that $(\WCN_{0})^{\st}$ implies that $Y_{0}$ is epsilon-delta continuous on $C$, relative to the standard world. 
For the third part, note that $\SE^{\st}$ implies $(\textsf{E}_{2})^{\st}$. 
\end{proof}
Note that $Y_{0}$ is not sequentially continuous relative to the standard world, i.e.\ the restriction of $\BCT_{C}$ to sequential continuity does not change the previous theorem. 
Moreover, due to $\textup{\textsf{M}}^{\omega}$, there is no difference between $\textsf{LPO}^{\st}$ and the weaker $\textsf{WLPO}^{\st}$, i.e.\ the associated notion of \emph{nondiscontinuity} (\cite{ishi1}*{Thm.\ 3}) is not relevant here.  

\smallskip

As an aside, $\SE$ follows from $\WMP$ by \cite{ishibrouwt}*{Thm.\ 11}, which in turns is provable in (constructive) recursive mathematics (see \cite{ishi1}*{Prop.\ 13}).
Hence, $\M$ is also inconsistent with theorems of recursive mathematics, relative to the standard world.  

\smallskip

As another aside, we prove that $\M$ is consistent (or even outright proves) certain theorems of intuitionistic mathematics.  
Indeed, a \emph{consequence} of $\BCT_{C}$ (together with $\FAN$) is that all functions on $C$ are bounded.   
\begin{thm}\label{ravudavu2}
The system $\M$ proves $(\forall^{\st}Y^{2})(\exists^{\st} N^{0})(\forall^{\st} f\leq 1)(Y(f)\leq N)$; the system $\M+\PFTPA$ proves $(\forall Y^{2})(\exists  N^{0})(\forall f\leq 1)(Y(f)\leq N)$.
\end{thm}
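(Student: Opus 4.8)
The plan is to derive the first (standard-world) statement directly from \emph{Realization} $\textsf{R}^{\omega}$, and then to obtain the second (internal) statement by a single application of $\PFTPA$, exactly as in the pattern used throughout Section~\ref{scat}.

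For the first part, I would fix a standard $Y^{2}$ and exploit the key observation that $Y(f)$ is a \emph{standard} natural number whenever $f\leq_{1}1$: every binary sequence is standard by item (b) of the standardness axioms (since $f\leq_{1}^{*}1$ and the constant-one function is standard by item (c)), and then $\st(Y(f))$ follows from item (d) applied to the standard $Y$. Setting $\Phi(f,n)\equiv[f\leq_{1}1\di Y(f)=n]$, I would check $(\forall f^{1})(\exists^{\st}n^{0})\Phi(f,n)$: for $f\leq_{1}1$ take $n:=Y(f)$, and for $f\not\leq_{1}1$ take $n:=0$, which makes $\Phi$ vacuously true. Applying $\textsf{R}^{\omega}$ then yields a standard witness $z$; since $z$ has type $0$, the relation `$\leq^{*}$' in the conclusion is just `$\leq_{0}$', so I obtain a standard $N^{0}$ with $(\forall f^{1})(\exists n\leq N)\Phi(f,n)$, i.e.\ $(\forall f\leq_{1}1)(Y(f)\leq N)$. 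Using once more that all binary sequences are standard, this is the same as $(\exists^{\st}N^{0})(\forall^{\st}f\leq_{1}1)(Y(f)\leq N)$, and as $Y$ was an arbitrary standard functional, the first claim follows.

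For the second part, I would let $\varphi(Y^{2})\equiv(\exists N^{0})(\forall f\leq_{1}1)(Y(f)\leq N)$, which is internal and has $Y$ as its only free variable. The first part, together with the identification of $(\forall^{\st}f\leq_{1}1)$ with $(\forall f\leq_{1}1)$, gives exactly $(\forall^{\st}Y^{2})\varphi(Y)$. Since here $Y$ is the transferred variable rather than a standard parameter, the parameter-free restriction on $\PFTPA$ is respected, and $\PFTPA$ applies to deliver $(\forall Y^{2})\varphi(Y)$, which is the desired internal statement.

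The step that requires care is the application of $\textsf{R}^{\omega}$: one must notice that on type $0$ the majorizability relation collapses to ordinary $\leq$, so that `there is $n\leq N$ with $Y(f)=n$' is precisely the bound $Y(f)\leq N$; this is what converts the \emph{pointwise} standardness of the values $Y(f)$ into a \emph{uniform} standard bound, and it is really the crux of the argument. Conceptually this is the nonstandard reflection of the remark above the theorem—that boundedness of all functionals on $C$ follows from $\FAN$ together with continuity of all functionals—and indeed $\M+\PFTPA$ proves $\FAN$ by Theorem~\ref{dorfkesn}; but the route via $\textsf{R}^{\omega}$ and $\PFTPA$ is the more direct one and avoids invoking $\FAN$ at all.
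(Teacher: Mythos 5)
Your proposal is correct and follows essentially the same route as the paper's proof: the first part is exactly the paper's argument (all binary sequences are standard, hence $(\forall f^{1})(\exists^{\st}n^{0})(f\leq_{1}1\di Y(f)\leq n)$, then $\textsf{R}^{\omega}$ with the type-$0$ collapse of $\leq^{*}$ to $\leq_{0}$), and the second part is precisely the paper's ``drop all but the leading `st' and apply $\PFTPA$''. Your extra care in writing the matrix as an implication so that $\textsf{R}^{\omega}$ applies, and in checking that $Y$ is the transferred variable rather than a parameter, only makes explicit what the paper leaves implicit.
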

\begin{proof}
For standard $Y^{2}$, since all binary sequences are standard, we have $(\forall f\leq 1)(\exists^{\st}n^{0})(Y(f)\leq n)$, and $\textsf{R}^{\omega}$ finishes the first part. 
For the second part, drop all but the leading `st' and apply $\PFTPA$.
\end{proof}
The previous implies that $\M+\PFTPA$ is inconsistent with recursive mathematics, as the latter involves \emph{unbounded} functionals on $2^{\N}$ (see \cite{beeson1}*{p.\ 70}).
In particular, $\M+\PFTPA+\textsf{CT}$ is inconsistent, which also follows from Theorem \ref{dorfkesn} if we in addition add $\QFAC^{1,0}$ to the system. 

\smallskip

Finally, we show that the Dinis-Gaspar system is inconsistent with a \emph{classical} continuity principle.  Our motivation is to exclude an incorrect interpretation of the results in the previous two sections.  
Indeed, one could say that $\M$ is slightly classical (as it proves $\WKL^{\st}$) and \emph{therefore} Theorem \ref{ravudavu}.  
As it turns out, $\M$ is inconsistent with $(\BCT_{C})^{\st}$ restricted to continuous functionals.    

\smallskip

Thus, define $\CCT_{C}\equiv(\forall^{\st} Y^{2})(\cont_{C}(Y) \di [\cont_{C}(Y)]^{\st})$, which expresses that all functionals which are (epsilon-delta) continuous on $C$, are also continuous in this way \emph{relative to the standard world}.  Note that $\CCT_{C}$ readily follows from \emph{Transfer}.   
\begin{thm}
The system $\M+\CCT_{C}$ is inconsistent. 
\end{thm}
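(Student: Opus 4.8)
The plan is to reuse the standard functional $Y_0$ from \eqref{toxxx} in the proof of Theorem~\ref{imkens}, together with the standard binary sequences $f_0 = 11\dots$ and $g_0 = \overline{f_0}N * 00\dots$ occurring there; recall that $N$ is nonstandard, that $Y_0$ is standard by item (b) of the standardness axioms since $Y_0 \leq_2^* 1$, and that $f_0 \approx_1 g_0$ while $Y_0(f_0) \ne Y_0(g_0)$.

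The crucial first observation I would record is that, although $Y_0$ fails standard continuity, it \emph{is} (uniformly) continuous on the Cantor space in the \emph{internal} sense, i.e.\ $\cont_C(Y_0)$ holds. Indeed, by \eqref{toxxx} the value $Y_0(f)$ depends only on whether $(\exists n \leq N+1)(f(n)=0)$, hence only on the initial segment $\overline{f}(N+2)$; thus the nonstandard number $N+2$ serves as a modulus, giving $(\forall f \leq 1)(\exists M^0)(\forall g \leq 1)(\overline{f}M = \overline{g}M \di Y_0(f)=Y_0(g))$, which is exactly $\cont_C(Y_0)$.

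Next I would apply the relevant instance of $\CCT_C$. Since $Y_0$ is standard and $\cont_C(Y_0)$ holds, $\CCT_C$ yields $[\cont_C(Y_0)]^{\st}$, namely $(\forall^{\st} f \leq 1)(\exists^{\st} M^0)(\forall^{\st} g \leq 1)(\overline{f}M = \overline{g}M \di Y_0(f)=Y_0(g))$. Instantiating at the standard $f_0$ would produce a \emph{standard} $M_0$ with $(\forall^{\st} g \leq 1)(\overline{f_0}M_0 = \overline{g}M_0 \di Y_0(f_0)=Y_0(g))$. As $g_0$ is standard, I may take $g := g_0$; and since $M_0$ is standard while $N$ is nonstandard we have $M_0 < N$, so $\overline{f_0}M_0 = \overline{g_0}M_0$ because $f_0$ and $g_0$ agree below $N$. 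This forces $Y_0(f_0)=Y_0(g_0)$, contradicting $Y_0(f_0)\ne Y_0(g_0)$.

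The main obstacle is conceptual rather than technical, and it lies in the first step: one must notice that the very functional $Y_0$ used throughout Section~\ref{scat} to witness failures of standard extensionality and standard continuity is nonetheless genuinely continuous \emph{internally}, with modulus the nonstandard constant $N+2$. This is precisely what makes $\CCT_C$ applicable, and $\CCT_C$ then attempts to transfer this internal continuity down to the standard world where it provably fails --- which is the source of the inconsistency.
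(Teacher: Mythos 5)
Your proposal is correct and follows essentially the same route as the paper, which simply notes that $\cont_{C}(Y_{0})$ holds internally (your nonstandard modulus $N+2$) and that $f_{0}\approx_{1}g_{0}$ with $Y_{0}(f_{0})\ne Y_{0}(g_{0})$ then contradicts the standard-world continuity supplied by $\CCT_{C}$. You have merely spelled out the instantiation at $f_{0}, g_{0}$ and the fact that a standard modulus $M_{0}$ must lie below the nonstandard $N$, all of which the paper leaves implicit.
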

\begin{proof}
Consider the standard objects $Y_{0}, f_{0}, g_{0}$ as in the proof of Theorem \ref{imkens} and note that $f_{0}\approx g_{0}$ contradicts $\CCT_{C}$ as $\cont_{C}(Y_{0})$.
\end{proof} 
One could replace the antecedent of $\CCT_{C}$ with more restrictive internal formulas, but the end result would still be the same. 

\subsubsection{Kripke's scheme}\label{hongry2}
We show that $\M$ is inconsistent with a fragment of \emph{Kripke's scheme} relative to the standard world.  
This is not \emph{that} surprising since $\M$ includes nonstandard Markov's principle $\textup{\textsf{M}}^{\omega}$, which implies $\MP^{\st}$, i.e.\ Markov's principle $\MP$ relative to the standard world.  
Indeed, Markov's principle $\MP$ is rejected in intuitionistic mathematics, which was first established by Brouwer using an axiom scheme nowadays called {Kripke's scheme} (see \cite{dummy}*{p.\ 244} for details).  
The `strong' form of this scheme is formulated as follows by Dummett in \cite{dummy}.    
\begin{princ}[$\textsf{KS}^{*}$]
For any formula $A$, we have
\[
(\tilde{\exists} \beta\leq 1)(A \asa (\exists n^{0})(\beta(n)=1)).
\]
\end{princ}
\noindent
We consider the following special case of $\textsf{KS}^{*}$:
\be\tag{$\textsf{\textup{KS}}_{0}^{*}$}
(\forall \alpha\leq 1)(\exists \beta\leq 1)(\forall m^{0})\big[  (\forall k^{0})\alpha(k, m)=0\asa (\exists n)(\beta(n,m)=0)  \big].
\ee
By \cite{troeleke1}*{\S9.5}, Markov's principle $\MP$ and the Kripke schema imply the law of excluded middle, which 
is a similar result to what is obtained in the following proof. 
\begin{thm}
The system $\M+(\textsf{\textup{KS}}_{0}^{*})^{\st}$ is inconsistent. 
\end{thm}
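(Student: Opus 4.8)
The plan is to reduce to the already-established inconsistency of $\M+\ACA_{0}^{\st}$ (Theorem~\ref{corfefet}) by deriving $\ACA_{0}^{\st}$ from $\M+(\textsf{KS}_{0}^{*})^{\st}$. Two features of $\M$ drive the argument: all binary sequences are standard (item~(b) of the standardness axioms), so that $(\forall^{\st}\alpha\leq1)$ and $(\forall \alpha\leq 1)$ coincide; and $\textsf{M}^{\omega}$ yields $\MP^{\st}$, i.e.\ Markov's principle relative to the standard world. The structural match between the two-argument data $\alpha(k,m),\beta(n,m)$ of $\textsf{KS}_{0}^{*}$ and the data $f(n,m),g(n)$ of $\ACA_{0}$ is not a coincidence: Kripke's scheme relative to `st' is exactly what converts a $\Pi_{1}^{0}$-predicate (relative to `st') into a $\Sigma_{1}^{0}$-one, and $\MP^{\st}$ then lets us \emph{decide} it, which is the `$\MP+$ Kripke $\di$ LEM' phenomenon alluded to before the statement.

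In detail, fix a standard $f\leq 1$ and, for standard $n$, abbreviate by $S_{n}$ the $\Sigma_{1}^{0}$-statement $(\exists^{\st}m)(f(n,m)=0)$ and by $P_{n}$ the $\Pi_{1}^{0}$-statement $(\forall^{\st}m)(f(n,m)=1)$; since $f$ is binary, intuitionistic logic gives $P_{n}\asa \neg S_{n}$. Applying $(\textsf{KS}_{0}^{*})^{\st}$ to the standard binary $\alpha$ defined by `$\alpha(m,n)=0$ iff $f(n,m)=1$' produces a standard $\beta\leq 1$ with $P_{n}\asa (\exists^{\st}p)(\beta(p,n)=0)$ for all standard $n$; write $\bar{S}_{n}$ for the right-hand side, so that $\bar{S}_{n}\asa \neg S_{n}$. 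Now interleave the two searches into a single standard binary sequence $\delta$ via $\delta(2m,n):=f(n,m)$ and $\delta(2p+1,n):=\beta(p,n)$, so that $(\exists^{\st}j)(\delta(j,n)=0)\asa S_{n}\vee \bar{S}_{n}$. Since $\bar{S}_{n}\asa\neg S_{n}$, the intuitionistic tautology $\neg\neg(S_{n}\vee\neg S_{n})$ yields $\neg\neg(\exists^{\st}j)(\delta(j,n)=0)$, whence $\MP^{\st}$ gives $(\exists^{\st}j)(\delta(j,n)=0)$ outright. Thus for each standard $n$ there is a \emph{standard} least such $j$, and its parity decides $S_{n}$: an even witness lies in $S_{n}$, while an odd one witnesses $\bar{S}_{n}\asa\neg S_{n}$ and hence excludes $S_{n}$.

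It remains to package this into an actual standard characteristic function. Here I would not appeal to a choice principle (the least witness need not exist at nonstandard $n$, so the naive function is not total); instead, following the standard-part constructions in the proofs of Theorems~\ref{imkens} and~\ref{dorkiiii}, I fix a nonstandard $N$ and define $g(n):=0$ if the least $j\leq N$ with $\delta(j,n)=0$ exists and is even, and $g(n):=1$ otherwise. By construction $g\leq_{1}1$, so $g$ is standard by item~(b). For standard $n$ the genuine least witness is standard, hence $\leq N$, so the bounded search locates it and its parity is correct; therefore $g(n)=0\asa S_{n}$ for all standard $n$. As $f\leq 1$ was an arbitrary standard input, this is precisely $\ACA_{0}^{\st}$, and Theorem~\ref{corfefet} delivers the desired inconsistency.

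The main obstacle is the $(\forall^{\st})$-versus-$(\forall)$ gap: because $(\textsf{KS}_{0}^{*})^{\st}$ speaks only about \emph{standard} witnesses, it cannot yield any genuine \emph{Transfer} (such as $\paai$), so the whole argument must be kept inside the standard world and reduced to a standard-world comprehension statement rather than to $(\exists^{2})^{\st}$ of Theorem~\ref{skimitar} (which would require a single type-$2$ decision functional, whereas Kripke's scheme supplies only first-order, i.e.\ sequence-valued, witnesses). Two points need care: the use of $\MP^{\st}$ to collapse the double negation of the combined $\Sigma_{1}^{0}$-statement---this is exactly where Markov's principle is indispensable, Kripke's scheme alone giving only the $\Sigma_{1}^{0}$-\emph{representation}---and the extraction of $g$ as a bona fide standard object by a bounded search up to a nonstandard $N$, which is what forces $g\leq_{1}1$ and hence standardness.
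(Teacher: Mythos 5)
Your proof is correct and follows essentially the same route as the paper's: both use $(\textsf{KS}_{0}^{*})^{\st}$ to obtain a standard $\Sigma_{1}^{0}$-representation of the complementary $\Pi_{1}^{0}$-statement, run the two searches against each other below a nonstandard bound to obtain a deciding function that is standard because it is majorized by $1$, and thereby derive $\ACA_{0}^{\st}$, which contradicts Theorem~\ref{corfefet}. The only cosmetic difference is that you interleave the two searches and read off the parity of the least witness (making the role of $\MP^{\st}$ explicit), whereas the paper compares the two least-witness functions $g_{0}$ and $h_{0}$ directly.
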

\begin{proof}
Fix nonstandard $N^{0}$ and fix standard $\alpha, \beta\leq 1$ as in $(\textsf{\textup{KS}}_{0}^{*})^{\st}$;  let $g_{0}(m)$ (resp.\ $h_{0}(m)$) be the least $k\leq N$ such that $\alpha(k,m)\ne 0$ (resp.\ $\beta(k,m)=0$) if it exists, and $N$ otherwise.  
Define (standard by definition) $\gamma\leq_{1}1$ such that $\gamma(m)=0$ if $g_{0}(m)>h_{0}(m)$, and $1$ otherwise.  Then if we can prove the following: 
\be\label{corfefe}
(\forall^{\st} m^{0})\big[  (\forall^{\st} k^{0})\alpha(k, m)=0\asa (\exists^{\st} n)(\beta(n,m)=0) \asa \gamma(m)=0\big], 
\ee
then we are done: $\textup{\textsf{M}}^{\omega}$ guarantees that \eqref{corfefe} implies $\ACA_{0}^{\st}$ from Section \ref{ACAS}, and Theorem \ref{corfefet} yields the desired contradiction. 
To prove \eqref{corfefe}, if for standard $m$, we have $(\exists^{\st} n)(\beta(n,m)=0)$, then $h_{0}(m)$ is standard, while $g_{0}(m)$ is nonstandard (by the first equivalence in \eqref{corfefe}), i.e.\ $h_{0}(m)< g_{0}(m)$.  
Note that $(\forall^{\st} k^{0})\alpha(k, m)=0$ implies $(\forall  k^{0}\leq K_{0})\alpha(k, m)=0$ for some nonstandard $K_{0}$ using \emph{Idealisation} $\textsf{I}^{\omega}$ as usual.  The reverse implication follows in 
the same way using $\textup{\textsf{M}}^{\omega}$.  
\end{proof}

\subsection{Non-standard aspects of the Dinis-Gaspar system}\label{nsansa}
We show that the system $\M$ includes a `standard part map', a notion introduced in the next paragraph.
As we will see, this raises the question to what extent $\M$ (and the system from \cite{fega}) can still be referred to as `Nonstandard Analysis' or `internal set theory'. 

\smallskip

First of all, Robinson introduces the `standard part map' $^{\circ}$ in \cite{robinson1}*{p.~57}; the latter maps any $x\in [0,1]$ to the (unique) \emph{standard} $^{\circ}x$ such that $x\approx {^{\circ}x}$, and the latter is called the `standard part' of the former.   
However, in the Robinsonian framework, the standard part map is \emph{external}.

\smallskip

Secondly, in light of the previous, there is no hope of having access to this map in Nelson's $\IST$: we are only given the \emph{Standardisation} axiom in which the standard part of a real \emph{exists}.  
Nonetheless, we show that $\M$ does afford a standard part map, and even a generalisation to functionals on the Cantor space.   

\begin{thm}
There is a term $u^{(1\times 0)\di 1}$ of G\"odel's $T$ such that $\M$ proves: for nonstandard $N^{0}$ and $x\in [0,1]$, we have $ \st^{1}(u(x, N))$ and $u(x, N)\approx x$.
\end{thm}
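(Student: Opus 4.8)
The plan is to produce a single closed term $u$ of G\"odel's $T$ by composing two closed terms: a term $b$ that computes a finite binary approximation of $x$, and a term $r$ that converts a binary sequence into the real it represents. The decisive point is that, although $u$ will be applied to the possibly nonstandard data $(x,N)$ (so that items (c)--(d) of the standardness axioms cannot be applied to $u$ itself), the intermediate binary sequence $b(x,N)$ is bounded by $1$ and is therefore standard by item (b), after which the standard closed term $r$ preserves standardness. This is the same device used repeatedly above, e.g.\ in the proof of Theorem \ref{dorkiiii}.

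Concretely, I would first define a closed term $b^{(1\times 0)\di 1}$ which, on input $(x,N)$ with $x\in[0,1]$, outputs the binary sequence $f:=b(x,N)$ whose first $N$ bits are the greedy binary digits of $x$ (computed by primitive recursion on $n<N$, at each stage comparing a rational approximation $[x](n+2)$ of sufficient precision against the current dyadic partial sum, which is a decidable comparison of rationals and hence available in $\M$) and whose remaining bits are $0$. Since $f\leq_{1}1$ we have $f\leq_{1}^{*}1$; as the constant-one sequence is a closed term it is standard by item (c), so $\st^{1}(f)$ follows by item (b). Next I would define a closed term $r^{1\di 1}$ sending a binary sequence $f$ to the fast-converging Cauchy sequence $k\mapsto \sum_{n\leq k} f(n)/2^{n}$, using Kohlenbach's hat function so that $r(f)$ always codes a genuine real. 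Setting $u(x,N):=r(b(x,N))$ gives a closed term of type $(1\times 0)\di 1$, and since $r$ is a closed term (standard by item (c)) while $f=b(x,N)$ is standard, item (d) yields $\st^{1}(u(x,N))$.

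It then remains to verify $u(x,N)\approx x$, i.e.\ that $u(x,N)$ and $x$ are infinitely close as reals. Here the role of the nonstandard parameter $N$ is essential: a routine error estimate for the greedy expansion gives $|u(x,N)-x|\leq \frac{1}{2^{N-2}}$, and since $N$ is nonstandard this bound lies below $\frac1k$ for every standard $k$, which is exactly $u(x,N)\approx x$. I expect the main obstacle to be this last step, and specifically the bookkeeping needed to define $b$ as a bona fide closed term (all comparisons carried out at the level of rationals, so as to avoid the undecidable comparison of reals) while still guaranteeing provably that the finite expansion is within $2^{-N+2}$ of $x$; by contrast, the standardness of $u(x,N)$ is the conceptually crucial but technically short part, depending only on the interplay of items (b)--(d). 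As an alternative to invoking item (d), one could note directly that the codes of the dyadic values $r(f)(k)$ are bounded by a closed-term function of $k$, so that $r(f)$ is majorized by a standard object and is standard by item (b) alone.
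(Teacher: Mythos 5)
Your decomposition is exactly the paper's: extract $N$ greedy binary digits of $x$ via decidable rational comparisons, pad with zeros, observe that the resulting sequence is $\leq_{1}^{*}1$ and hence standard by item~(b) of the standardness axioms, and then push it back into $\R$ with a standard conversion term (the paper's $w$, your $r$), concluding standardness of the composite either via item~(d) or via majorizability. The paper implements the digit extraction by reusing the functional $\Phi$ from Theorem~\ref{dorkiiii}, whose sign test is `$[x](N)\leq \frac1N$' for the \emph{nonstandard} $N$, but that is the same device as yours.

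There is, however, one step that fails as literally written: performing the stage-$n$ comparison against $[x](n+2)$ does \emph{not} yield $|u(x,N)-x|\leq 2^{-(N-2)}$. The greedy algorithm is not self-correcting in the downward direction: if the stage-$0$ test overshoots, the partial sum exceeds $x$ and all later bits are forced to $0$, so the stage-$0$ comparison error persists into the final answer. Concretely, take $x=\frac14$ coded by a fast-converging sequence with $q_{2}=\frac12$ (admissible, since $|q_{2}-q_{2+i}|<\frac14$ can still hold); the test `$[x](2)\geq\frac12$' then sets $b_{0}=1$, the partial sum becomes $\frac12$, every later bit is $0$, and the output differs from $x$ by $\frac14$, which is certainly not infinitesimal. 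The cure is exactly what the paper's $\Phi$ builds in: every comparison must be carried out at a precision governed by the nonstandard parameter (e.g.\ test $[x](N)$ against $\frac1N$, or use precision $2^{-(N+2)}$ at every stage), so that the worst-case residual $\max_{n}\delta_{n}$ is itself infinitesimal and $u(x,N)\approx x$ follows. With that single change your argument coincides with the paper's; the standardness half, which you correctly identify as the conceptually decisive part, is fine as you state it.
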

\begin{proof}
Let $f_{0}^{1}$ be the constant zero function. 
Recall the functional $\Phi$ form Theorem \ref{dorkiiii} and fix nonstandard $N$; define $v(x,N)$ as $\Psi(x,N)*f_{0}$ if $-\frac{1}{N}\leq_{\Q} [x](2^{N})\leq_{\Q}1+\frac{1}{N} $, and $f_{0}$ otherwise.  
Here, $\Psi(x, 0)$ is $\langle \Phi(x-\frac{1}{2})\rangle$ and $\Psi(x, n+1)$ is $\Psi(x, n)*\langle b\rangle$, where $b=\Phi(x -(\frac{1}{2^{n+1}}+\sum_{i=0}^{n-1}\frac{\Phi(x, n)(i)}{2^{i+1}})  )$.
Since $v(x,N)\leq_{1}^{*}1$, the former is standard (in the sense that $\st^{1}(v(x,N))$ for any $x\in [0,1]$), and satisfies $ \sum_{n=0}^{\infty}\frac{v(x, N)(n)}{2^{n+1}}\approx x$ by design.  Define standard $w^{1\di1}$ as $w(\alpha)(n):=\sum_{i=0}^{n}\frac{\alpha(n)}{2^{n}}$, and note that $u:=w\circ v$ is as required by the theorem. 
\end{proof}
Recall that we (may) view any sequence as a real; since $\lambda x.v(x, N)\leq_{1\di 1}^{*}1$ we have $\st^{1\di 1}( \lambda x.v(x, N))$, and the standard part map $u:=w \circ v$ is thus \emph{standard} in $\M$, a fairly `non-standard' situation as discussed in Remark \ref{blasphemy}.   
\begin{thm}
There is $s^{(2\times 0)\di 2}$ in G\"odel's $T$ such that $\M$ proves: for nonstandard $N^{0}$ and near-standard $Y^{2}$ such that $(\textsf{\textup{E}}_{2})^{\st}$, we have $ \st^{2}(s(Y, N))\wedge (\forall f\in C)(s(Y, N)(f)=Y(f))$.
\end{thm}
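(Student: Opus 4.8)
The plan is to reprise the construction of Corollary~\ref{frenemey}, now presenting the witnessing functional explicitly as a term so that it depends uniformly on $Y$ and $N$. Concretely, I would define $s$ by
\[
s(Y,N)(f):=
\begin{cases}
Y(\overline{f}N * 00\dots) & (\forall i<N)(f(i)\leq_{0}1)\\
0 & \textup{otherwise}
\end{cases}
\]
for $f^{1}$. Both the binarity test `$(\forall i<N)(f(i)\leq_{0}1)$' and the cut-and-extend operation $f\mapsto \overline{f}N*00\dots$ are primitive recursive in $f$ and $N$, so $s$ is a genuine term of G\"odel's $T$ of the required type $(2\times 0)\di 2$; this mirrors the term $v$ from the previous theorem.

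First I would prove the agreement clause $(\forall f\in C)(s(Y,N)(f)=Y(f))$. For binary $f$ the test passes, whence $s(Y,N)(f)=Y(\overline{f}N*00\dots)$, and the extended sequence $\overline{f}N*00\dots$ again lies in $C$. Since $N$ is nonstandard, $\overline{f}N*00\dots$ agrees with $f$ on every standard argument, so $\overline{f}N*00\dots\approx_{1}f$. As in the proof of Theorem~\ref{kafir}, the hypothesis $(\textsf{\textup{E}}_{2})^{\st}$ renders $Y$ nonstandard continuous on the Cantor space, and feeding the standard $f$ and $g:=\overline{f}N*00\dots$ into this continuity yields $Y(\overline{f}N*00\dots)=Y(f)$, as required.

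Next I would establish $\st^{2}(s(Y,N))$, which is the real content. One cannot invoke item~(d) of the standardness axioms here, since $N$ is nonstandard; instead standardness is recovered through item~(b) and strong majorizability, exactly as in Corollary~\ref{frenemey}. Because all binary sequences are standard and $Y$ is near-standard, we have $(\forall f\in C)(\exists^{\st}n)(Y(f)=n)$; applying $\textsf{R}^{\omega}$ produces a \emph{standard} number $n_{0}$ with $(\forall f\in C)(Y(f)\leq_{0}n_{0})$. For arbitrary $u^{1}$ the value $s(Y,N)(u)$ is either $Y$ of the binary sequence $\overline{u}N*00\dots$, hence $\leq_{0}n_{0}$, or else $0\leq_{0}n_{0}$; in either case $s(Y,N)(u)\leq_{0}n_{0}$. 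Consequently $s(Y,N)\leq_{2}^{*}\lambda g.n_{0}$. The constant functional $\lambda g.n_{0}$ is standard since $n_{0}$ is (apply item~(d) to the closed term $\lambda x^{0}.\lambda g^{1}.x$), so item~(b) delivers $\st^{2}(s(Y,N))$.

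The step I expect to be the main obstacle is precisely this standardness claim: in contrast to Corollary~\ref{frenemey}, where the witness is built only after the bound $n_{0}$ is in hand, here a single fixed term must come out standard on the nonstandard input $N$. The resolution is that standardness is salvaged not from $N$ but from the standard majorant $\lambda g.n_{0}$ via item~(b); the nonstandard $N$ serves only to guarantee $\overline{f}N*00\dots\approx_{1}f$ for binary $f$, which is what forces $s(Y,N)$ to coincide with $Y$ on $C$. The remaining bookkeeping—verifying that the cut-and-extend operation and binarity test are expressible in G\"odel's $T$, and that the displayed majorization $s(Y,N)\leq_{2}^{*}\lambda g.n_{0}$ holds in the strong sense—is routine.
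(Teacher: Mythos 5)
Your proposal is correct and follows essentially the same route as the paper's proof: the same cut-and-extend term $Y(\overline{f}N*00\dots)$ on the binary case, standard extensionality to get agreement on $C$, and the standard bound $n_{0}$ from near-standardness plus $\textsf{R}^{\omega}$ feeding item (b) via $s(Y,N)\leq_{2}^{*}n_{0}$. The only (harmless, arguably cleaner) deviation is returning $0$ rather than $n_{0}$ in the default branch, which keeps $s$ a genuine closed term of G\"odel's $T$ without the parameter $n_{0}$.
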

\begin{proof}
By the near-standardness of $Y^{2}$, and the fact that all binary sequences are standard, we have $(\forall f\in C)(\exists^{\st}n)(Y(f)\leq n)$, and $\textsf{R}^{\omega}$ implies 
$(\forall f\in C)(\exists n\leq n_{0})(Y(f)\leq n)$ for some standard $n_{0}$.
Fix nonstandard $N_{0}$ and define $s(Y,N_{0})(f)$ as $Y(\overline{f}N_{0}*00\dots )$ if $\overline{f}N_{0}$ is a binary sequence, and $n_{0}$ otherwise.  Then $s(Y, N_{0})(f)=Y(f)$ for $f\in C$ by standard extensionality, 
and $\lambda f.s(Y, N_{0})(f )\leq_{2}^{*}n_{0}$ implies that $\st^{2}(\lambda f.s(Y, N_{0})(f))$, as required. 
\end{proof}
\begin{cor}
The system $\M+(\exists^{2})$ proves that there is $\Phi^{2\di 2}$ such that for near-standard $Y^{2}$, we have $ \st^{2}(\Phi(Y))\wedge (\forall f\in C)(\Phi(Y)(f)=Y(f))$.
\end{cor}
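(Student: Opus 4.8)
The plan is to adapt the construction of Corollary~\ref{frenemey2}, specialised to the standard bound given by the type-one constant-one function $1$, and then to observe that the resulting restriction is uniform in $Y$. Concretely, I would take as candidate the functional
\[
\Phi(Y)(f):=
\begin{cases}
Y(f) & \text{if } f\leq_{1}1,\\
0 & \text{otherwise.}
\end{cases}
\]
The case distinction ``$f\leq_{1}1$'', i.e.\ $(\forall n^{0})(f(n)\leq_{0}1)$, is a $\Pi_{1}^{0}$-predicate in $f$ and is therefore decided \emph{uniformly} in $f$ by the Turing jump functional $(\exists^{2})$. This is exactly where the added hypothesis $(\exists^{2})$ enters, and it is what lets $\Phi$ be a genuine functional of type $2\di 2$, rather than a term of G\"odel's $T$ depending on an (external) nonstandard parameter $N$ as in the preceding theorem.

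The verification then splits according to the two conjuncts. The agreement clause $(\forall f\in C)(\Phi(Y)(f)=Y(f))$ is immediate from the definition, since $f\in C$ is precisely $f\leq_{1}1$. For the standardness clause $\st^{2}(\Phi(Y))$ I would argue as in Corollaries~\ref{frenemey} and~\ref{frenemey2}: since every binary sequence is standard in $\M$, near-standardness of $Y$ yields $(\forall f\in C)(\exists^{\st}n)(Y(f)=n)$, and folding in the vacuously-true clause for $f\notin C$ gives $(\forall f^{1})(\exists^{\st}n^{0})[f\leq_{1}1\di Y(f)\leq n]$. Applying realisation $\textsf{R}^{\omega}$ (at type $0$, where $\leq^{*}$ collapses to $\leq$) produces a single standard bound $n_{0}$ with $(\forall f\leq_{1}1)(Y(f)\leq n_{0})$. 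By construction $\Phi(Y)(f)\leq n_{0}$ for every $f$, so $\Phi(Y)\leq_{2}^{*}\lambda g.n_{0}$ with the right-hand side standard; hence $\st^{2}(\Phi(Y))$ by item (b) of the standardness axioms.

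The point worth stressing, and essentially the only place where care is needed, is that this argument does \emph{not} invoke standard extensionality $(\textsf{\textup{E}}_{2})^{\st}$, unlike the preceding theorem. That theorem needed $(\textsf{\textup{E}}_{2})^{\st}$ precisely because it truncated inputs to a nonstandard length $N_{0}$ and then had to argue $Y(\overline{f}N_{0}*00\dots)=Y(f)$ via $f\approx_{1}\overline{f}N_{0}*00\dots$; here the $(\exists^{2})$-decision of membership in $C$ lets us keep the input $f$ itself, so no approximate-equality step occurs and the (inconsistent, by Corollary~\ref{skimitar2}) axiom $(\textsf{\textup{E}}_{2})^{\st}$ is sidestepped. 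I expect the main obstacle to be conceptual rather than computational: one must check that the $\textsf{R}^{\omega}$-bound $n_{0}$ is genuinely uniform over all of $C$ so that a single standard majorant bounds $\Phi(Y)$, and that using $(\exists^{2})$ to decide $f\leq_{1}1$ does not covertly reintroduce any standardness assumption on $Y$.
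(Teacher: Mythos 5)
Your proposal is correct and follows essentially the same route as the paper: the paper also uses $(\exists^{2})$ to define $\Phi(Y)(f)$ as $Y(f)$ if $f\in C$ and a fixed value otherwise, obtains the standard bound $n_{0}$ from near-standardness plus $\textsf{R}^{\omega}$, and concludes $\st^{2}(\Phi(Y))$ via majorizability and item (b) of the standardness axioms. Your additional observations---that $(\exists^{2})$ is what makes the case split legitimate and that no appeal to $(\textsf{\textup{E}}_{2})^{\st}$ is needed since the input is not truncated---are accurate and merely make explicit what the paper leaves implicit.
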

\begin{proof}
Use $\exists^{2}$ to define $\Phi(Y)(f)$ as $Y(f)$ if $f\in C$, and zero otherwise.  Then $\Phi(Y)$ is standard in the same way as in the theorem.  
\end{proof}
The previous theorem could be obtained for $F:[0,1]\di \R$ using Theorem \ref{dorkiiii}, but this development would mostly be repetitive. 
We finish this section with an \emph{informal} remark on just how unnatural the standard part maps of $\M$ are.  
\begin{rem}\label{blasphemy}\rm
The standard part maps of $\M$ are quite unnatural \emph{from the point of view of internal set theory} for the following reason: the standard part of a real $x\in [0,1]$ is \emph{unique} in $\IST$, i.e.\ if $x\approx y\approx z$ and the latter two are standard reals, then $y=z$. 
Hence, if there were $\Phi:\R\di \R$ such that $\Phi(x)\approx x\wedge \st(\Phi(x))$ for any $x\in [0,1]$, then we observe that $(\forall x\in [0,1])(\st(x)\asa x=\Phi(x)) $.  
However, one of the central tenets of $\IST$ is that `st' is not definable via an internal formula:
\begin{quote}
To assert that $x$ is a standard set has no meaning within conventional mathematics-it is a new undefined notion. (\cite{wownelly}*{p.\ 1165})
\end{quote}
These observations do not cause problems for $\M$ of course: the uniqueness of standard parts in $\IST$ requires \emph{Transfer} anyway, while `$x=_{\R} y\wedge \st(x)$' does not imply $\st(y)$ in $\M$ due to issues of representation of reals. 
Nonetheless, $\M$ is only one basic step removed from being able to define `$\st^{1}$' via an internal formula, something which goes against the very nature of $\IST$.  Although the frameworks are of course different, a similar case can be made for the 
Robinsonian approach.   
\end{rem}
Now, the law of excluded middle is referred to as a `taboo' in constructive mathematics (see \cite{beeson1}*{I.3}).  
In light of the previous remark, those endorsing this kind of language should probably use \emph{heresy} when referring to the above standard part maps of $\M$ in the context of Nonstandard Analysis and internal set theory.  

\section{Conclusion}\label{konkelfoes}
In the previous sections, we have provided fairly conclusive answers to questions (Q1) and (Q2) from Section \ref{introke}. 
We isolated (very) weak fragments of \emph{Transfer} which are still inconsistent with $\M$, and we identified a number of 
axioms of intuitionistic (and general constructive) mathematics which are inconsistent with $\M$ when formulated relative to the standard world.   
We even established that $\M$ allows for a highly elementary \emph{standard part map}, a rather `non-standard' feature of $\M$.   

\smallskip

These facts all suggest -in one way or another- that $\M$ is indeed \emph{non-classical}, but does not really deserve the description \emph{intuitionistic}.  
At the same time, since a standard part map is not available in Nelson's internal set theory, and \emph{external} in Robinson's approach, $\M$ really pushes the boundary of what still counts as `Nonstandard Analysis' and `internal set theory'.  

\smallskip

In our opinion, the aforementioned problems trace back to \emph{one} problematic axiom of $\M$, namely item (b) of the nonstandard axioms.  
Simply put, this axiom `makes too many things standard', an obvious example being the Cantor space.      
While this axiom may be necessary and/or useful for the connection to the bounded functional interpretation (see \cite{dinidini}*{\S6} and \cite{fega}*{\S4}), 
it is not natural from the point of view of Nonstandard Analysis.

\begin{ack}\rm
This research was supported by the following funding bodies: FWO Flanders, the John Templeton Foundation, the Alexander von Humboldt Foundation, and LMU Munch (via the \emph{Excellence Initiative} and CAS LMU).  
The author expresses his gratitude towards these institutions.   I also thank the referee for the many helpful suggestions.  
\end{ack}

\begin{bibdiv}
\begin{biblist}
\bib{beeson1}{book}{
  author={Beeson, Michael J.},
  title={Foundations of constructive mathematics},
  series={Ergebnisse der Mathematik und ihrer Grenzgebiete},
  volume={6},
  note={Metamathematical studies},
  publisher={Springer},
  date={1985},
  pages={xxiii+466},
}

\bib{brie}{article}{
  author={van den Berg, Benno},
  author={Briseid, Eyvind},
  author={Safarik, Pavol},
  title={A functional interpretation for nonstandard arithmetic},
  journal={Ann. Pure Appl. Logic},
  volume={163},
  date={2012},
  number={12},
  pages={1962--1994},
}

\bib{bennosam}{article}{
  author={van den Berg, Benno},
  author={Sanders, Sam},
  title={Reverse Mathematics and parameter-free Transfer},
  journal={To appear in Annals of Pure and Applied Logic},
  volume={},
  doi={10.1016/j.apal.2018.10.003},
  date={2018},
  number={},
  note={Available on arXiv: \url {http://arxiv.org/abs/1409.6881}},
  pages={},
}

\bib{dife}{article}{
   author={Dinis, Bruno},
   author={Ferreira, Fernando},
   title={Interpreting weak K\"onig's lemma in theories of nonstandard
   arithmetic},
   journal={MLQ Math. Log. Q.},
   volume={63},
   date={2017},
   number={1-2},
   pages={114--123},
}

\bib{dinidini}{article}{
  author={Bruno {Dinis} and Jaime {Gaspar}},
  title={{Intuitionistic nonstandard bounded modified realisability and functional interpretation.}},
  journal={{Ann. Pure Appl. Logic}},
  volume={169},
  number={5},
  pages={392--412},
  year={2018},
}

\bib{dummy}{book}{
  author={Michael {Dummett}},
  title={{Elements of intuitionism. 2nd ed.}},
  pages={xii + 331},
  year={2000},
  publisher={Oxford: Clarendon Press},
}

\bib{BFI}{article}{
   author={Ferreira, Fernando},
   author={Oliva, Paulo},
   title={Bounded functional interpretation},
   journal={Ann. Pure Appl. Logic},
   volume={135},
   date={2005},
   number={1-3},
   pages={73--112},
}

\bib{BFI2}{article}{
   author={Ferreira, Fernando},
   title={Injecting uniformities into Peano arithmetic},
   journal={Ann. Pure Appl. Logic},
   volume={157},
   date={2009},
   number={2-3},
   pages={122--129},
}

\bib{fega}{article}{
  author={Ferreira, Fernando},
  author={Gaspar, Jaime},
  title={Nonstandardness and the bounded functional interpretation},
  journal={Ann. Pure Appl. Logic},
  volume={166},
  date={2015},
  number={6},
  pages={701--712},
}

\bib{benno2}{article}{
  author={Amar {Hadzihasanovic} and Benno {van den Berg}},
  title={{Nonstandard functional interpretations and categorical models.}},
  journal={{Notre Dame J. Formal Logic}},
  volume={58},
  number={3},
  pages={343--380},
  year={2017},
}

\bib{vajuju}{book}{
  author={van Heijenoort, Jean},
  title={From Frege to G\"odel. A source book in mathematical logic, 1879--1931},
  publisher={Harvard University Press},
  place={Cambridge, Mass.},
  date={1967},
  pages={xi+660 pp. (1 plate)},
}

\bib{ishi1}{article}{
  author={Ishihara, Hajime},
  title={Reverse mathematics in Bishop's constructive mathematics},
  year={2006},
  journal={Philosophia Scientiae (Cahier Sp\'ecial)},
  volume={6},
  pages={43-59},
}

\bib{ishibrouwt}{article}{
  author={Ishihara, Hajime},
  title={On Brouwer's continuity principle},
  journal={To appear in \emph {Indagationes Mathematicae}},
  date={2018},
  pages={pp.\ 22},
}

\bib{kookje}{article}{
  author={Kohlenbach, Ulrich},
  title={On weak Markov's principle},
  note={Dagstuhl Seminar on Computability and Complexity in Analysis, 2001},
  journal={MLQ Math. Log. Q.},
  volume={48},
  date={2002},
  number={suppl. 1},
  pages={59--65},
}

\bib{kooltje}{article}{
  author={Kohlenbach, Ulrich},
  title={On uniform weak K\"onig's lemma},
  note={Commemorative Symposium Dedicated to Anne S. Troelstra (Noordwijkerhout, 1999)},
  journal={Ann. Pure Appl. Logic},
  volume={114},
  date={2002},
  number={1-3},
  pages={103--116},
}

\bib{kohlenbach2}{article}{
  author={Kohlenbach, Ulrich},
  title={Higher order reverse mathematics},
  conference={ title={Reverse mathematics 2001}, },
  book={ series={Lect. Notes Log.}, volume={21}, publisher={ASL}, },
  date={2005},
  pages={281--295},
}

\bib{kohlenbach3}{book}{
  author={Kohlenbach, Ulrich},
  title={Applied proof theory: proof interpretations and their use in mathematics},
  series={Springer Monographs in Mathematics},
  publisher={Springer-Verlag},
  place={Berlin},
  date={2008},
  pages={xx+532},
}

\bib{wownelly}{article}{
  author={Nelson, Edward},
  title={Internal set theory: a new approach to nonstandard analysis},
  journal={Bull. Amer. Math. Soc.},
  volume={83},
  date={1977},
  number={6},
  pages={1165--1198},
}

\bib{dagsam}{article}{
  author={Normann, Dag},
  author={Sanders, Sam},
  title={Nonstandard Analysis, Computability Theory, and their connections},
  journal={Submitted, Available from arXiv: \url {https://arxiv.org/abs/1702.06556}},
  date={2017},
}

\bib{dagsamIII}{article}{
  author={Normann, Dag},
  author={Sanders, Sam},
  title={On the mathematical and foundational significance of the uncountable},
  journal={Submitted, arXiv: \url {https://arxiv.org/abs/1711.08939}},
  date={2017},
}

\bib{robinson1}{book}{
  author={Robinson, Abraham},
  title={Non-standard analysis},
  publisher={North-Holland},
  place={Amsterdam},
  date={1966},
  pages={xi+293},
}

\bib{SB}{article}{
  author={Sanders, Sam},
  title={To be or not to be constructive},
  journal={\emph {Indagationes Mathematicae} and arXiv \url {https://arxiv.org/abs/1704.00462}},
  date={2017},
  pages={pp.\ 68},
}

\bib{samcie18}{article}{
  author={Sanders, Sam},
  title={Some nonstandard equivalences in Reverse Mathematics},
  journal={Proceedings of CiE2018, Lecture notes in Computer Science, Springer},
  date={2018},
  pages={pp.\ 10},
}

\bib{simpson1}{collection}{
  title={Reverse mathematics 2001},
  series={Lecture Notes in Logic},
  volume={21},
  editor={Simpson, Stephen G.},
  publisher={ASL},
  place={La Jolla, CA},
  date={2005},
  pages={x+401},
}

\bib{simpson2}{book}{
  author={Simpson, Stephen G.},
  title={Subsystems of second order arithmetic},
  series={Perspectives in Logic},
  edition={2},
  publisher={CUP},
  date={2009},
  pages={xvi+444},
}

\bib{stillebron}{book}{
  author={Stillwell, John},
  title={Reverse mathematics, proofs from the inside out},
  pages={xiii + 182},
  year={2018},
  publisher={Princeton Univ.\ Press},
}

\bib{troelstra1}{book}{
  author={Troelstra, Anne Sjerp},
  title={Metamathematical investigation of intuitionistic arithmetic and analysis},
  note={Lecture Notes in Mathematics, Vol.\ 344},
  publisher={Springer Berlin},
  date={1973},
  pages={xv+485},
}

\bib{troeleke1}{book}{
  author={Troelstra, Anne Sjerp},
  author={van Dalen, Dirk},
  title={Constructivism in mathematics. Vol. I},
  series={Studies in Logic and the Foundations of Mathematics},
  volume={121},
  publisher={North-Holland},
  date={1988},
  pages={xx+342+XIV},
}

\end{biblist}
\end{bibdiv}

\bye